\newtheorem{theorem}{Theorem}[section]
\newtheorem{lemma}{Lemma}[section]
\newtheorem{claim}{Claim}[section]
\newtheorem{observation}{Observation}[section]
\newtheorem{property}{Property}[section]
\newcommand{\sumodd}{\oplus}
\newcommand{\Sumodd}{\bigoplus}
\newcommand{\comment}[1]{}
\newcommand{\ignore}[1]{}
\def\R{{\mathbb R}}
\def\N{{\mathbb N}}
\def\Z{{\mathbb Z}}
\def\ar{{\it r}}
\def\half{{{1 \over 2}}}
\newcommand{\circplus}{\,\mbox{\rm \r{+}}\,}
\def\clap#1{\hbox to 0pt{\hss#1\hss}}
  \def\moverlay{\mathpalette\mov@rlay}
  \def\mov@rlay#1#2{\leavevmode\vtop{%
    \baselineskip\z@skip \lineskiplimit-\maxdimen
    \ialign{\hfil$#1##$\hfil\cr#2\crcr}}}
\newcommand\A{\mathcal{A}}
\def\T{{\cal T}}
\def\P{{P}}
\def\A{{\cal A}}
\def\S{{X}}
\def\F{{\cal F}}
\def\Fgoth{{\mathfrak F}}
\def\fgoth{{\mathfrak f}}
\newcommand{\remove}[1]{}
\begin{document}
\title{Rational Polygons: Odd Compression Ratio and Odd Plane 
Coverings\footnote{The final version of the contribution is due to be published in the collection of papers 
{\em A Journey through Discrete Mathematics:\! A Tribute to Jiri Matousek} edited by Martin Loebl, Jaroslav Nesetril and Robin Thomas, due to be published by Springer.}}
\author{
Rom Pinchasi\thanks{
Math.~Department,
Technion--Israel Institute of Technology,
Haifa 32000, Israel.
{\tt room@math.technion.ac.il}.
Supported by ISF grant (grant No. 409
/16)}
\and
Yuri Rabinovich\thanks{
Department of Computer Science, Haifa University, Haifa, Israel.
{\tt yuri@cs.haifa.ac.il}.
}
}
\maketitle
\begin{abstract}
Let $P$ be a polygon with rational vertices in the plane.
We show that for any finite odd-sized collection of translates of $P$,
the area of the set of points lying in an odd number of these
translates is bounded away from $0$ by a constant
depending on $P$ alone.

The key ingredient of the proof is a construction of an odd
cover of the plane by translates of $P$. That is, 
we establish a family $\F$ of translates of $P$ covering (almost) every 
point in the plane a uniformly bounded odd number of times. 
\end{abstract}

\section{Introduction}
The starting point of this research is the following isoperimetric-type problem
about translates of compact sets in $\R^d$:

{\em Let $\S\subset \R^d$ be a compact set, and let $Z \subset \mathbb{R}^d$ be a finite 
set of odd cardinality. Consider the finite odd-sized collection ${\cal F} = \{\S + z\}_{z\in Z}$ of translates of $\S$. Let $U\subset \R$ be the set of all points that belong to an odd number of the members of $\cal F$. How small can be the Lebesgue measure of $U$ in terms
of the Euclidean measure of $\S$?}

Denoting the infimum of this value by $V\!ol_{odd}(\S)$, called
the {\em odd volume} of $\S$, we define the {\em 
odd compression ratio} of $\S$ as $\alpha^\circ(\S) = V\!ol_{odd}(\S)\,/\,V\!ol(\S)$,
where $V\!ol(\S)$ is the Euclidean volume of $\S$.
Observe that $\alpha^\circ(\S) \leq 1$, as $\cal F$
may consist of a single element $\S$.  Clearly, $\alpha^\circ(\S)$ is an affine invariant. 

It was observed by the second author about a decade ago that $\alpha^\circ$ of a unit $d$-cube $Q^d$
is $1$. Indeed, informally, consider $\R^d$ under the action (i.e., translation) of $\Z^d$. 
The unit cube (with parts of its boundary removed) is a fundamental domain of $\R^d/\Z^d$. 
The quotient map $\phi:\R^d \rightarrow Q^d$ maps any translate of $Q^d$ onto $Q^d$ in a  
one-to-one manner. Moreover, the quotient map satisfies
\[
\phi\left(\Sumodd_{\T \in \cal F} T\right) ~=~  \Sumodd_{\T \in \cal F} \phi(T)\,,
\]
where $\Sumodd$ denotes the set-theoretic union modulo $2$, i.e., 
the set of all points covered by an odd number of the members of $\cal F$. Since the quotient map is
locally volume preserving, it is globally volume-nondecreasing, and so one concludes that the volume of 
$\Sumodd_{\T \in \cal F}\, T$ is at least that of 
$\phi(\Sumodd_{\T \in \cal F}\, T) \;=\;
\Sumodd_{\T \in \cal F}\, \phi(T) \;=\;
\Sumodd_{\T \in \cal F}\, Q^d \;=\; Q^d$\,.

A similar argument shows that $\alpha^\circ$ of a centrally symmetric planar hexagon is
$1$ as well. But what about other sets, i.e., a triangle? The second author vividly 
remembers discussing this question with Jirka Matou\v{s}ek in a pleasant cafe at Mal{\`a} 
Strana, laughing that they are too old for Olympiad-type problems...\footnote{A discrete version
of the problem about the translates of a square in $\R^2$ had indeed found its way 
into a mathematical olympiad~\cite{TT09}. }     

The value of $\alpha^0$ of the triangle (recall that any two triangles are affinely equivalent) was determined 
by the first author in~\cite{P14}; it is ${1\over 2}$. 

Next significant progress on the problem was obtained in~\cite{OPP15}. It was shown
there that for a union of two disjoint intervals of length $1$ on a line with a certain irrational 
distance between them, the odd compression ratio is $0$. The proof uses some algebra of polynomials, 
and Diophantine approximation. The construction easily extends to higher dimensions. 
In addition,~\cite{OPP15} introduced a technique for obtaining lower bounds 
on $\alpha^\circ(\S)$, and used it to show that for $\S$'s that are unions of 
finitely many cells of the $2$-dimensional grid, $\alpha^\circ(\S) > 0$.

In the present paper we further develop the technique of~\cite{OPP15}, and use
it to prove that for any planar rational polygon $P$, the odd compression 
ratio $\alpha^\circ(P)$ is bounded away from $0$ by some positive constant explicitly defined 
in terms of $P$. In fact, the statement applies to any compact planar figure 
with piecewise linear boundary, and (finitely many) rational vertices. 
In view of the above mentioned result from~\cite{OPP15}, the assumption of rationality 
cannot in general be dropped. 

Perhaps more importantly, the value of $\alpha^\circ(\S)$ is related here
to the value of some other natural geometric invariant of $\S$.
The other invariant is $\theta^\circ(\S)$, the smallest possible {\em average density} in an 
{\em odd cover} of $\R^2$ by a family $\F$ of translates of $\S$. By odd cover we mean 
that every point $p\in\R^2$, with a possible exception of a measure $0$ set, is covered 
by the members of $\F$ an odd and uniformly bounded number of times. 

While~\cite{OPP15} does not directly consider odd covers of $\R^2$, it still implies that 
$\alpha^\circ(\S) \geq \theta^\circ(\S)^{-1}$. We include here two complete 
proofs of this useful inequality.
 
The existence of odd covers of $\R^2$ by translates of a rational polygon is by no means
obvious. Most of the present paper is dedicated to constructing such covers.
We are aware of no related results in the literature. 

While many of the results and constructions presented here can be easily extended 
to higher dimensions, some essential parts resist simple generalization, and more work
is required in order to understand the situation there.  

To conclude the Introduction, we hope that the present paper will somewhat elucidate
the meaning of the odd compression ratio $\alpha^\circ(X)$, and that the odd covers 
introduced here will prove worthy of further study. 

\ignore{
{\em Importantly}, to simplify the presentation, throughout this paper we shall not distinguish between sets (or unions of sets) that are identical up to symmetric difference of measure $0$. 
In particular, speaking of unions of polygons, we shall not  care about 
the union of their 1-dimensional boundaries. 
In some (but not all) places we will be explicit about this slight abuse of terminology. 
}
%
\section{Preliminaries}
\subsection{Two Basic Operators}
In what follows, we shall extensively use the following two operators on subsets of $\mathbb{R}^2$: ~$\sumodd$ and $\circplus$. Let us briefly discuss them here. 

The first operator $\sumodd$ is the set-theoretic union modulo $2$. Given a family $\cal F$ of subsets of $\R^2$ so that any $p\in \R^2$ is covered at most
finitely many times by $\cal F$,  ~$\Sumodd_{X \in {\cal F}} X$ is the set of all points of $\R^2$ covered by an odd number of members in $\F$. Observe that $\sumodd$ is commutative and associative.

The second operator, $\circplus$, is less standard. It is the Minkowski sum
modulo $2$:
\[
X \circplus Z ~=~ \Sumodd_{x \in X,\, z\in Z} x + z 
~=~ \Sumodd_{z\in Z} (X + z)\,,
\]
where $X+z$ denotes the translate of $X$ by $z$. I.e., $a \in X \circplus Z$ if and only if the number
of representations of $a$ of the form $\,a=x+z\,$ is an odd natural number.
Unlike the Minkowski sum, $X \circplus Z$ is well defined only when every 
$a \in \R^2$ has at most  finitely many representations of the form $\,x+z\,$ as above. 
This requirement is met, e.g., when $Z$ is finite, or when $Z$ is a discrete set of points at distance $\geq \epsilon >0$ from each other, and $\S$ is bounded.
Since the Minkowski sum extends to any finite number of sets, and it is commutative and associative, the same holds for $\circplus$ (provided, as before, that every $a$ has finitely many representations).

Moreover, the following distributive law holds.
Let ${\cal G}$ be a family of sets in $\R^2$, and let $S \subset \R^2$.
Assume that the family of sets $\{Y+s\}_{Y\in {\cal G},\, s \in S}$\, covers
any point of $\R^2$ at most finitely many times. Then: 
\begin{equation}
\label{eq:distrib}
\left(\Sumodd_{Y \in {\cal G}} Y \right)  \circplus\; {S}  ~=~ \Sumodd_{Y \in {\cal G}} (Y \circplus {S})\,.
\end{equation}
Indeed, the equality is trivial when $S$ consists of a single element.
Thus, by definition of $\circplus$, 
\[
\left(\Sumodd_{Y \in {\cal G}} Y \right)  \,\circplus\, S ~\stackrel{}{=} ~
\Sumodd_{s\in S} \left(\left(\Sumodd_{Y \in {\cal G}}Y\right) + s \right) ~\stackrel{}{=} ~
\Sumodd_{s\in S} \Sumodd_{Y \in {\cal G}} (Y + s) ~\stackrel{*}{=} ~ 
\Sumodd_{Y \in {\cal G}} \Sumodd_{s\in S} (Y + s) ~\stackrel{}{=} ~ \Sumodd_{Y \in {\cal G}} (Y \circplus S)\,.
\]
It remains to validate the change of order of summation in the starred equality.
For $a\in \R^2$ consider the set 
$\{(Y,s)\,|\, a\in Y+s\} \subseteq {\cal G} \times S$.
By our assumptions, this set is always finite. Therefore, for any $a$,
~${\bf 1}_a  \big(\Sumodd_{s\in S} \Sumodd_{Y \in {\cal G}} (Y + s)\big) \;=\;
\Sumodd_{s\in S} \Sumodd_{Y \in {\cal G}} {\bf 1}_a  (Y + s)$ has
only finitely many nonzero terms. Hence, the order of summation in the double sum 
~$\Sumodd_{s\in S} \Sumodd_{Y \in {\cal G}} (Y + s)$~ is interchangeable. 

Finally, notice that similarly to Minkowski sum,  $X \circplus \emptyset = \emptyset$,
while $X \sumodd \emptyset = X$.
\subsection{Covers and Their Densities}
\label{sec:dense}
{\em It is important to stress that throughout this paper whenever
we speak on covers or odd covers of the plane it always means covering {\bf up to a set
of measure $0$}, even if it is not explicitly said so. This convention helps to avoid discussing
unnecessary technicalities related to the boundaries of the sets in the cover.}

For every compact measurable set $X\subset \R^2$, we denote by $A(X)$ the Euclidean area 
of $X$. 
Let $Z \subseteq \R^2$ be a discrete set. The family 
${\cal F} = \{X + z\}_{z\in Z}$ has a {\em uniformly bounded degree} if there exists a constant 
$d_{\cal F}$ such that every ${a\in \R^2}$ belongs to at most $d_{\cal F}$ members of $\cal F$.
Further, such $\cal F$ is called a {\em cover} of $\R^2$ if $X + Z = \R^2$.
I.e., the cover degree of any $a \in \R^2$ by the members of a cover $\cal F$ is uniformly bounded, and, 
up to a set of measure 0, it is strictly positive.  

The (lower) {\em density} of $\cal F$ with a uniformly bounded degree, $\rho({\cal F})$, is defined by 
\[
\rho({\cal F}) ~=~ \liminf_{n\rightarrow \infty}\,  
{{\sum_{z\in Z} A(\, Q_n\cap (X +z))} \over {n^2}},
\]
where $Q_n$ is the $n \times n$ square centered at the origin.  Clearly, $\rho({\cal F}) \geq 1$
when $\cal F$ is a cover or $\R^2$. 

Since ${{\sum_{z\in Z} A(\, Q_n\cap X +z)} /{n^2}}$ is precisely the average
of the cover degrees $d_{\cal F}(a)$ where $a$ ranges over $Q_n$,
the density $\rho({\cal F})$ can be viewed as a kind of an average degree of the cover of $\R^2$ by $\cal F$.   

\begin{claim}
\label{eq:dense}
Fixing $Z$ and varying the (measurable) $X$, the density of the family ${\cal F}=\{X + z\}_{z\in Z}$ is 
proportional to $A(X)$. I.e., $~\rho({\cal F}) ~=~  c_Z \cdot A(X)$,
where $c_Z$ is a constant depending solely on $Z$. 
(In particular, when $Z$ is a lattice, $c_Z$ is the reciprocal of the area of the fundamental domain of $Z$.)
\end{claim}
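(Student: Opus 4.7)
The plan is to use Fubini to rewrite the summed area in terms of a local counting function for $Z$, and then to show that this counting function is asymptotically independent of the shift $y\in X$, so that only the global density of $Z$ matters. The natural candidate for the constant is
$c_Z := \liminf_{n\to\infty} \#(Z \cap Q_n)/n^2$.

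First, I would write $A(Q_n \cap (X+z)) = \int \mathbf{1}_{Q_n}(a)\mathbf{1}_X(a-z)\, da$ and apply Fubini after summing over $z\in Z$, obtaining
\[
\sum_{z\in Z} A\!\left(Q_n \cap (X+z)\right) \;=\; \int_X \#\!\bigl(Z \cap (Q_n - y)\bigr)\, dy.
\]
This turns the question into estimating $\#(Z \cap (Q_n - y))$ uniformly for $y$ in the bounded set $X$.

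Second, I would verify that the uniformly bounded degree of $\mathcal{F}$ forces $Z$ to have bounded local density. (If $A(X)=0$ the claim is trivial, so assume $A(X)>0$.) Concretely, if a disc of fixed radius $r$ contained $K$ points $z_1,\ldots,z_K$ of $Z$, the translates $X+z_i$ would all lie inside $B(c, r+\mathrm{diam}(X))$ for some $c$, and the bounded-degree hypothesis would give $K\cdot A(X) \;\le\; d_{\mathcal{F}}\cdot \pi(r+\mathrm{diam}(X))^2$, bounding $K$. Tiling a bounded convex set $B$ by such discs yields an estimate of the form $\#(Z\cap B) \le C_Z\cdot A(B) + C'_Z\cdot \mathrm{perim}(B)$ with constants depending only on $X$ and $d_{\mathcal{F}}$.

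Third, since $X$ is compact, for every $y \in X$ the symmetric difference $(Q_n - y) \triangle Q_n$ lies in a strip of width at most $\mathrm{diam}(X)$ along $\partial Q_n$, which decomposes into four convex rectangles each of area and perimeter $O(n)$. Applying the local density bound from step two to each rectangle gives
\[
\bigl| \#(Z \cap (Q_n - y)) - \#(Z \cap Q_n)\bigr| \;=\; O(n),
\]
uniformly in $y \in X$. Dividing by $n^2$ shows that $\#(Z \cap (Q_n - y))/n^2$ and $\#(Z \cap Q_n)/n^2$ differ by $O(1/n)$ uniformly in $y\in X$, so both have the same $\liminf$, namely $c_Z$.

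Finally, the uniform $O(1/n)$ error on the compact set $X$ lets me exchange the $\liminf$ with the integral:
\[
\rho(\mathcal{F}) \;=\; \liminf_{n\to\infty}\, \frac{1}{n^2}\int_X \#\!\bigl(Z \cap (Q_n - y)\bigr)\, dy \;=\; c_Z \cdot A(X),
\]
which is exactly the asserted identity. When $Z$ is a lattice, the $\liminf$ is actually a limit and equals the reciprocal of the area of a fundamental domain by the standard lattice-point count, matching the parenthetical remark. The main obstacle I expect is step two, converting the pointwise degree bound on $\mathcal{F}$ into a usable density bound on $Z$ with the correct area/perimeter shape; the rest is routine Fubini plus a boundary-strip estimate.
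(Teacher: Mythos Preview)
Your proof is correct and follows essentially the same strategy as the paper: both reduce the density to $\bigl(\liminf_{n} |Z\cap Q_n|/n^2\bigr)\cdot A(X)$ by showing that the discrepancy between $\sum_{z} A(Q_n\cap(X+z))$ and $|Z\cap Q_n|\cdot A(X)$ is $O(n)$, using a boundary-strip count of $Z$-points controlled by the uniformly bounded degree of $\mathcal{F}$. The only difference is organizational---you invoke the Fubini identity $\sum_z A(Q_n\cap(X+z)) = \int_X \#(Z\cap(Q_n-y))\,dy$ and then compare $\#(Z\cap(Q_n-y))$ to $\#(Z\cap Q_n)$, whereas the paper sandwiches the sum directly between $|Z\cap Q_{n-2\delta}|\cdot A(X)$ and $|Z\cap Q_{n+2\delta}|\cdot A(X)$ with $\delta=\mathrm{diam}(X)$.
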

\begin{proof}{\em (Sketch)~}
Assume w.l.o.g., that $X$ contains the origin, and let $\delta=Diam(X)$. 
Consider
$\Delta_n ~=~ \big|\,{{\sum_{z\in Z} A(\, Q_n\cap X +z)} - |Z\cap Q_n|\cdot A(X)\,\big|}$.
How big can it be? 
On the one hand,
\[
|Z \cap Q_{n-2\delta}|\cdot A(X)  ~\leq~     {\sum_{z\in Z} A(\, Q_n\cap X +z)} ~\leq~ 
 |Z \cap Q_{n+2\delta}|\cdot A(X)\,,
\]
and therefore 
\[\Delta_n ~\leq~ |Z \cap Q_{n+2\delta}|\cdot A(X) - |Z \cap Q_{n-2\delta}|\cdot A(X)
~=~ |Z \,\cap\, (Q_{n+2\delta}\setminus Q_{n-2\delta})| \cdot A(X)\,.
\]
On the other hand, since $Z \,\cap\, (Q_{n+2\delta}\setminus Q_{n-2\delta}) \,+\, X$ is contained in $Q_{n+4\delta}\setminus Q_{n-4\delta}$, covering no point there more than $d_{\cal F}$ times, it follows that $|Z \,\cap\, (Q_{n+2\delta}\setminus Q_{n-2\delta})| \cdot A(X)$ is at most $O(n)\cdot \delta \cdot d_{\cal F}$.
Hence, $\Delta_n = O(n)\cdot \delta \cdot d_{\cal F}$, and so $\Delta_n/n^2 \rightarrow 0$. 
The conclusion follows:
\[
\rho({\cal F}) ~=~ \liminf_{n\rightarrow \infty}\,  
{{\sum_{z\in Z} A(\, Q_n\cap X +z)} \over {n^2}} ~=~
\liminf_{n\rightarrow \infty}\,  
{{|Z \cap Q_n|\cdot A(X) \pm \Delta_n} \over {n^2}}  ~=~ 
\]
\[ \liminf_{n\rightarrow \infty}\,  
{{|Z \cap Q_n|} \over {n^2}} \cdot A(X)\, ~=~
c_Z \cdot A(X)\;.
\]
The fact that for a lattice $Z$, $\lim_{n\rightarrow \infty}\, {{|Z \cap Q_n|} / {n^2}}$ is the inverse of the 
the area of the fundamental domain of $Z$, is well known (see, e.g.,~\cite{rogers}).
 \end{proof}
The {\em covering density} of $X$, $\theta (X)\geq 1$, is defined as the 
infimum of $\rho({\cal F})$ over all covers of the form ${\cal F}=\{X + z\}_{z\in Z}$. 
It is well known (see, e.g.,~\cite{rogers}) that $\theta(X)$ is an affine invariant. 
\subsection{Odd Covers}
Let $X\subset \R^2$ be a compact set of a positive area $A(X)>0$. The family ${\cal F} = \{X + z\}_{z\in Z}$
for $Z \subseteq \R^2$ is called an {\em odd cover} of $\R^2$ if $X \circplus Z$ is well defined, and equals
to $\R^2$ up to a set of measure $0$. Notice that if $\F$ is an odd cover
of $\R^2$, then in particular it is a cover of $\R^2$. 
As before, we shall further require that the maximal degree of the cover of $\R^2$ by $\cal F$  is uniformly bounded.

The {\em odd covering density} of a compact $X$, $\theta^\circ (X)\geq 1$ is defined 
as the infimum of $\rho({\cal F})$ over all odd covers $\cal F$ as above. 
If no such $\cal F$ exists, set $\theta^\circ(X)=\infty$. Notice that $\theta^\circ(X) \geq \theta(X)$.
Similarly to the usual covering density $\theta (X)$, 
the odd covering density $\theta^\circ(X)$ is an affine invariant. This intuitively plausible 
statement can be proved formally along the same lines as the standard proof of the corresponding 
statement for the usual covers (see, e.g.,~\cite{rogers}). 
\footnote{
The requirement that $\cal F$ has a uniformly bounded 
degree does not appear in the standard definition of $\theta(X)$, despite the fact that
it is used in the proof of the affine invariance of $\theta(X)$ and elsewhere. The reason is that for 
any $\epsilon > 0$, a cover $\cal F$ can be easily modified into a {\em periodic} cover $\cal F'$ with $\rho({\cal F'}) \leq \rho({\cal F}) + \epsilon$, i.e., the corresponding $Z'$ is of the form $\Lambda + K$,
where $\Lambda$ is a lattice, and $K$ is finite (see, e.g.,~\cite{rogers}). Thus, w.l.o.g., one may restrict the discussion of $\theta(X)$ to periodic covers, 
and those are always uniformly bounded for a compact $X$. 
In contrast, the odd covers apparently do not allow such a modification, and so
the assumption about the uniformly bounded degree seems to be essential for them. 
This said, all odd covers occurring in this paper are periodic.
}
\subsection{Odd Compression Ratio: the Definition}
Let $X \subset \R^2$ be a compact set of area $0 < A(X) < \infty$.
Define $A_{odd}(X)$, the \emph{odd area} of $X$, 
to be the maximum number such that for any finite and odd-sized 
collection $\F$ of translates of $X$, the set of all points in $\R^2$ belonging to an odd 
number of members of $\F$ has area \;$\geq\, A_{odd}(X)$. I.e., $A_{odd}(X)$
is the infimum of $A(X \circplus K)$ over all finite odd-sized sets $K \subset \R^2$ (see \cite{P14, OPP15}). 

Define $\alpha^\circ(X)$, the \emph{odd compression ratio} of $X$,
as ${A_{odd}(X)} /{A(X)}$. Clearly, $0 \leq \alpha^\circ(X) \leq 1$, and it is an 
affine invariant.
%
\section{The Odd Cover Lemma}
\label{sec:odd-cov}
The following lemma, a variant, and in fact a special case, of 
Lemma\;1 from~\cite{OPP15}, is a useful tool 
for obtaining lower bounds on the odd compression ratio of $X$. 
For completeness, we provide two different
proofs for it. The first is shorter and simpler due to the preparation 
done in Section~{\ref{sec:dense}. It is a streamlined variant 
of the proof used in~\cite{OPP15}.
The second proof follows a somewhat different logic, and can be viewed as a generalization of 
the factor-space argument mentioned in the Introduction.
\begin{lemma}
\label{lemma:beta}
For any compact set $X$ of a positive measure in $\R^2$, the odd compression ratio of $X$
is at least the reciprocal of its odd covering density. That is,
\[
\alpha^\circ (X) ~\geq~ \theta^\circ(X)^{-1}~.
\]
\end{lemma}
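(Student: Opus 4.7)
The plan is to start from any odd cover $\mathcal{F} = \{X + z\}_{z \in Z}$ of $\R^2$ with uniformly bounded degree, and use it, together with an arbitrary finite odd-sized $K \subset \R^2$, to manufacture a new odd cover of $\R^2$ whose underlying translated set is $X \circplus K$. Comparing the densities of the two covers via Claim~\ref{eq:dense} will then force $A(X \circplus K)$ to be large relative to $A(X)$. (If no odd cover of $\R^2$ by translates of $X$ exists, i.e.\ $\theta^\circ(X) = \infty$, the claim is vacuous.)

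First I would set $\mathcal{F}' = \{(X \circplus K) + z\}_{z \in Z}$ and verify that $\mathcal{F}'$ is an odd cover of $\R^2$ with uniformly bounded degree. Uniform boundedness is immediate, since each $a \in \R^2$ lies in at most $|K| \cdot d_{\mathcal F}$ members of $\mathcal{F}'$ (every membership $a \in (X \circplus K) + z$ is witnessed by some $k \in K$ with $a \in X + k + z$). For the odd cover property I would apply the distributive law~(\ref{eq:distrib}), with $\mathcal{G} = \{X + k\}_{k \in K}$ and $S = Z$, to obtain
\[
(X \circplus K) \circplus Z \;=\; \Sumodd_{k \in K}\bigl((X+k) \circplus Z\bigr) \;=\; \Sumodd_{k \in K}\bigl((X \circplus Z) + k\bigr).
\]
Since $\mathcal{F}$ is an odd cover, $X \circplus Z = \R^2$ up to measure zero, so each summand equals $\R^2$ up to measure zero; and because $|K|$ is odd, the mod-$2$ union of $|K|$ copies of $\R^2$ is again $\R^2$ up to measure zero. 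Hence $\mathcal{F}'$ is an odd cover.

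Next I would apply Claim~\ref{eq:dense} to both families, which share the same translation set $Z$, obtaining the same proportionality constant $c_Z$:
\[
\rho(\mathcal{F}) \;=\; c_Z \cdot A(X), \qquad \rho(\mathcal{F}') \;=\; c_Z \cdot A(X \circplus K).
\]
Because $\mathcal{F}'$ is a cover of $\R^2$, $\rho(\mathcal{F}') \geq 1$, so dividing yields $A(X \circplus K) \geq A(X)/\rho(\mathcal{F})$. Taking the infimum over finite odd-sized $K$ gives $A_{odd}(X) \geq A(X)/\rho(\mathcal{F})$, and then taking the infimum over all odd covers $\mathcal{F}$ gives $A_{odd}(X) \geq A(X)/\theta^\circ(X)$, i.e.\ $\alpha^\circ(X) \geq \theta^\circ(X)^{-1}$.

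The main technical obstacle is the justification that $\mathcal{F}'$ is an odd cover, namely the identity $(X \circplus K) \circplus Z = \R^2$ modulo measure zero. The argument rests on two points that need to be checked carefully: that the hypotheses of the distributive law~(\ref{eq:distrib}) hold (each $a \in \R^2$ is covered finitely many times by $\{(X+k)+z\}_{(k,z) \in K \times Z}$, which follows from compactness of $X$, finiteness of $K$, and the uniformly bounded degree of $Z$), and that $|K|$ being odd really collapses the iterated mod-$2$ union of copies of a full-measure set to that full-measure set. Everything else is a direct application of Claim~\ref{eq:dense} and the definitions of $A_{odd}$ and $\theta^\circ$.
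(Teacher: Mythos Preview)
Your argument is correct and is essentially identical to the paper's Proof~(A): you build the new odd cover $\{(X\circplus K)+z\}_{z\in Z}$, compare densities via Claim~\ref{eq:dense}, and take the two infima. The only cosmetic difference is that where the paper simply invokes the commutativity of $\circplus$ to write $(X\circplus Z)\circplus K=(X\circplus K)\circplus Z$, you go through the distributive law~(\ref{eq:distrib}) explicitly, which amounts to the same rearrangement.
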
 
\begin{proof}{\it (A)}~~~ Let ${\cal F} = \{X + z\}_{z\in Z}$ be an odd cover of $\R^2$ of 
density $\rho({\cal F})$, and maximal cover degree $d_{\cal F} < \infty$. (If no such ${\cal F}$ 
exists, the lemma is trivially true.) Let $K \subset \R^2$ be any finite set  of odd cardinality.
Set $Y = X \circplus K$.

Consider the set $(X \circplus Z) \circplus K$.
On the one hand, it is equal to $\R^2$, up to a set of measure $0$. 
This is because $(X \circplus Z) = \R^2$, again up to a set of measure $0$,
and the cardinality of $K$ is odd.

On the other hand, using the commutativity of $\circplus$, one concludes that 
~$(X \circplus Z) \circplus K \,=\, (X \circplus K) \circplus Z \,=\, Y \circplus Z$. In other words, 
the family  ${\cal G} = \{Y + z\}_{z\in Z}$ is an odd cover of $\R^2$ of a maximal covering degree at 
most $d_{\cal F}\cdot |K|$. 

By Claim~\ref{eq:dense}, there is a constant $c_{Z}$ depending only on $Z$,
such that for every measurable set $W \subset \R^2$ such that $\{W + z\}_{z\in Z}$
is a cover of $\R^2$, it holds that  $\rho(\{W + z\}_{z\in Z})=c_Z \cdot A(W)$.
Therefore,
\[
1 ~\leq~ \rho({\cal G}) ~=~ c_Z \cdot A(Y) ~=~ \rho({\cal F}) \cdot {{A(Y)} \over {A(X)}}\,
~~~~~~~~\Longrightarrow~~~~~~~~ \rho({\cal F})^{-1}\;\leq\;  {{A(Y)} \over {A(X)}}~. 
\]
Taking the infimum over all odd-sized $K$'s to minimize $A(Y)/A(X)$, and the 
infimum over all legal $Z$'s to minimize $\rho({\cal F})$, one concludes that
$\theta^\circ(X)^{-1} ~\leq~ \alpha^\circ (X)$.
\end{proof}
\begin{proof}{\it (B)}~~~
Let ${\cal F} = \{X + z\}_{z\in Z}$ be an odd cover of $\R^2$ as before,
and let $S \subset \R^2$ be compact.  Consider the following mapping $\phi$ of 
the compact sets $S$ to the compact subsets of $X$:
\[
\phi(S) ~=~ \Sumodd_{z\in Z} \;  (S - z) \cap X ~=~ 
(S \circplus (-Z)) \cap X\,.
\]
\begin{claim}
\label{cl:phi}
$\mbox{}$
\begin{enumerate}
\item $~\phi(-X + a) ~=~ X$;  
\item $~\phi\left(\Sumodd_{i=1}^k S_i\right) =\; \Sumodd_{i=1}^k  \;\phi(S_i)$;  
\item $~A(\phi(S)) \leq A(S)\cdot \tilde{d}_{\cal F}(S)$,
where $\tilde{d}_{\cal F}(S)$ is the average degree of a cover of $S$ by $\cal F$,
i.e., the average of the cover degrees $d_{\cal F}(a)$, where $a$ ranges over $S$.
\end{enumerate}
\end{claim}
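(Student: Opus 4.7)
The plan is to verify the three parts of Claim~\ref{cl:phi} in sequence, relying on three ingredients: the distributive law~(\ref{eq:distrib}), the defining odd-cover property of $\cal F$, and the standard sub-additivity $A\!\left(\Sumodd_i Y_i\right) \leq \sum_i A(Y_i)$. As always in this paper, all set equalities below are understood modulo measure zero.

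For part (1), I would unfold $\phi(-X+a) \;=\; \left((-X+a)\,\circplus\,(-Z)\right) \cap X$ and observe that a point $p$ lies in $(-X+a)\,\circplus\,(-Z)$ if and only if the number of $z \in Z$ with $a - p \in X+z$ is odd, i.e.\ if and only if $a-p$ is covered by an odd number of members of $\cal F$. Since $\cal F$ is an odd cover of $\R^2$, this holds for almost every $p$, so $(-X+a)\,\circplus\,(-Z) = \R^2$ up to a measure-zero set, and intersecting with $X$ yields $X$.

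Part (2) is essentially formal: I would apply the distributive law~(\ref{eq:distrib}) to pull $\circplus\,(-Z)$ through the outer $\sumodd$, and then use that intersection with $X$ commutes with $\sumodd$ (which is clear at the level of indicator functions, since multiplication by ${\bf 1}_X$ preserves parity of a finite sum). To invoke~(\ref{eq:distrib}) I would first verify its finiteness hypothesis: compactness of each $S_i$ together with the uniformly bounded degree of $\cal F$ forces the family $\{S_i - z\}_{i,z}$ to cover each point of $\R^2$ only finitely many times.

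For part (3), I would apply sub-additivity to $\phi(S) = \Sumodd_{z\in Z}\bigl((S-z)\cap X\bigr)$ to obtain $A(\phi(S)) \leq \sum_{z\in Z} A\!\left((S-z)\cap X\right)$. By translation invariance $A\!\left((S-z)\cap X\right) = A\!\left(S \cap (X+z)\right)$, so the right-hand side equals $\sum_{z\in Z} A(S \cap (X+z)) = \int_S d_{\cal F}(a)\,da = A(S)\cdot \tilde{d}_{\cal F}(S)$; the sum is finite because $S$ is compact and $\cal F$ has bounded degree. The only real subtlety is part (1), where the odd-cover property is available strictly in its almost-everywhere form and the measure-zero exceptional set must be tracked through the argument; parts (2) and (3) are routine once the distributive law and the standard facts about $\sumodd$ and $A(\cdot)$ are in hand.
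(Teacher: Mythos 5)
Your proposal is correct and follows essentially the same route as the paper's own proof: part (1) is the observation that $(-X+a)\circplus(-Z)=a-(X\circplus Z)=\R^2$ almost everywhere (you phrase it pointwise, the paper writes it as a chain of set identities); part (2) is the interchange of the double $\sumodd$-sum together with the fact that intersection with $X$ commutes with $\sumodd$ (which is the distributive law you cite, and the same finiteness justification underpins both); and part (3) is word-for-word the paper's subadditivity-plus-translation-invariance computation.
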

\begin{proof}  Indeed, for (1), keeping in mind that $X \circplus Z = \R^2$, and that 
$a-\R^2 = \R^2$, one gets
\[
\phi(-X + a) ~=~ \Sumodd_{z\in Z} \, (-X + a - z) \,\cap\, X 
                  ~=~ X \;\cap~  \Sumodd_{z\in Z} \,a+ (-X - z) 
                  ~=~ X \,\cap~~ a - (X\circplus Z) 
                  ~=~ X \cap\, \R^2 ~=~ X\,\!.
\]
For (2),
\[
\phi\left(\Sumodd_{i=1}^k S_i\right) = 
\Sumodd_{z\in Z} \left(\left(\Sumodd_{i=1}^k S_i ~- z \right) \cap X\right)
\;=\; \Sumodd_{z\in Z} \Sumodd_{i=1}^k (S_i - z) \cap X
\;=\; \Sumodd_{i=1}^k \Sumodd_{z\in Z} (S_i - z) \cap X \;=\; \Sumodd_{i=1}^k  \;\phi(S_i)\,.
\] 
For (3), observing that  $(S-z) \cap X = S\cap (X + z) - z$, one concludes that \vspace{3mm}
\\ 
$A(\phi(S)) 
~=~  A\left(\Sumodd_{z\in Z} [ S\cap (X+z) - z]\right)
~\leq~ \sum_{z\in Z} A( S\cap (X+z)) ~=~ A(S)\cdot \tilde{d}_{\cal F}(S)$.
\end{proof}
Instead of proving a lower bound on $\alpha^\circ(X)$, we shall prove one for 
$\alpha^\circ(-X+a)$, with a suitably chosen $a$. 
Since $\alpha^\circ(X)$ is invariant under affine transformations of $\R^2$,  
~$\alpha^\circ(-X+a) = \alpha^\circ(X)$. For typographical reasons, 
set $X_a = -X + a$. 

Consider, as before, any finite set $K \subset \R^2$ of odd cardinality, and let
$Y_a = X_a \circplus K$. On the one hand, by Claim~\ref{cl:phi}(3), 
~$A(\phi(Y_a)) \;\leq \; \tilde{d}_{\cal F}(Y_a) \cdot A(Y_a)$\,. On the other hand, by
Claim~\ref{cl:phi}(2)$\&$(1),
~$
\phi(Y_a) ~=~  \phi(\Sumodd_{k\in K} (X_a + k))  ~=~ \Sumodd_{k\in K} \phi(X_a + k) 
~=~ \Sumodd_{k\in K} X  ~=~ X\,.
$~
Thus, 
\[
\tilde{d}_{\cal F}(Y_a) \cdot A(Y_a)  ~\geq~ A(\phi(Y_a)) ~=~ A(X)
~~~~~~~~\Longrightarrow~~~~~~~~ 
{{A(Y_a)} \over {A(X_a)}} ~\geq~ \tilde{d}_{\cal F}(Y_a)^{-1}~.
\]
It remains to choose the translation vector $a$ as to minimize $\tilde{d}_{\cal F}(Y_a)$.
Getting back to the discussion of Section~\ref{sec:dense}, a simple averaging 
argument shows that for a random uniform $a\in Q_n$, the expected value of
$\tilde{d}_{\cal F}(Y_a)$ gets arbitrarily close to $\tilde{d}_{\cal F}(Q_n)$ as $n$ tends to infinity.
Keeping in mind the definition of $\rho_{\cal F}$, this implies in turn that there is a sequence of
$a$'s such that $\tilde{d}_{\cal F}(Y_a)$ approaches $\rho_{\cal F}$. 
Minimizing over all legal odd covers $\cal F$, one concludes that the infimum of 
$\tilde{d}_{\cal F}(Y_a)$ over $a\in \R^2$ is at most $\theta^\circ(X)$.
\end{proof}
To demonstrate the usefulness of Lemma~\ref{lemma:beta}, assume that there is a tiling
of $\R^2$ by translates of $X$. Then, $\theta^\circ(X)=1$, implying $\alpha^\circ(X) = 1$.
This yields the aforementioned result about the non-compressibility of the square
and the centrally symmetric hexagon.

Further, assume that $X$ is a triangle $(a,b,c)$.
Let $\Lambda$ be the lattice spanned by $\{ \half (b-a),\, \half (c-a) \}$. Then,
${\cal F} = \{X + z\}_{z\in \Lambda}$ is an odd cover of $\R^2$ covering each point in the plane
either 1 or 3 times, with $\rho({\cal F})=2$. This implies $\alpha^\circ(X) \geq \half$,
matching the optimal bound of~\cite{P14}.
%
\section{Odd Covers by Stripe Patterns}
A \emph{stripe pattern} is a (non-singular) affine image of the set 
$\{(x,y) \in \mathbb{R}^2 \mid \mbox{$\lfloor y \rfloor$ is even}\}$.
I.e., it is an infinite set of parallel stripes of equal width $w$, such that the 
distance between any two adjacent stripes is $w$ as well
(see Figure \ref{figure:stripes}). The {\em direction} of a stripe 
pattern is, expectedly, the direction of a boundary line of any stripe in it.
The \emph{width} of the stripe pattern is the $w$ as above. 

\begin{figure}[ht]
    \centering
    \includegraphics[width=7cm]{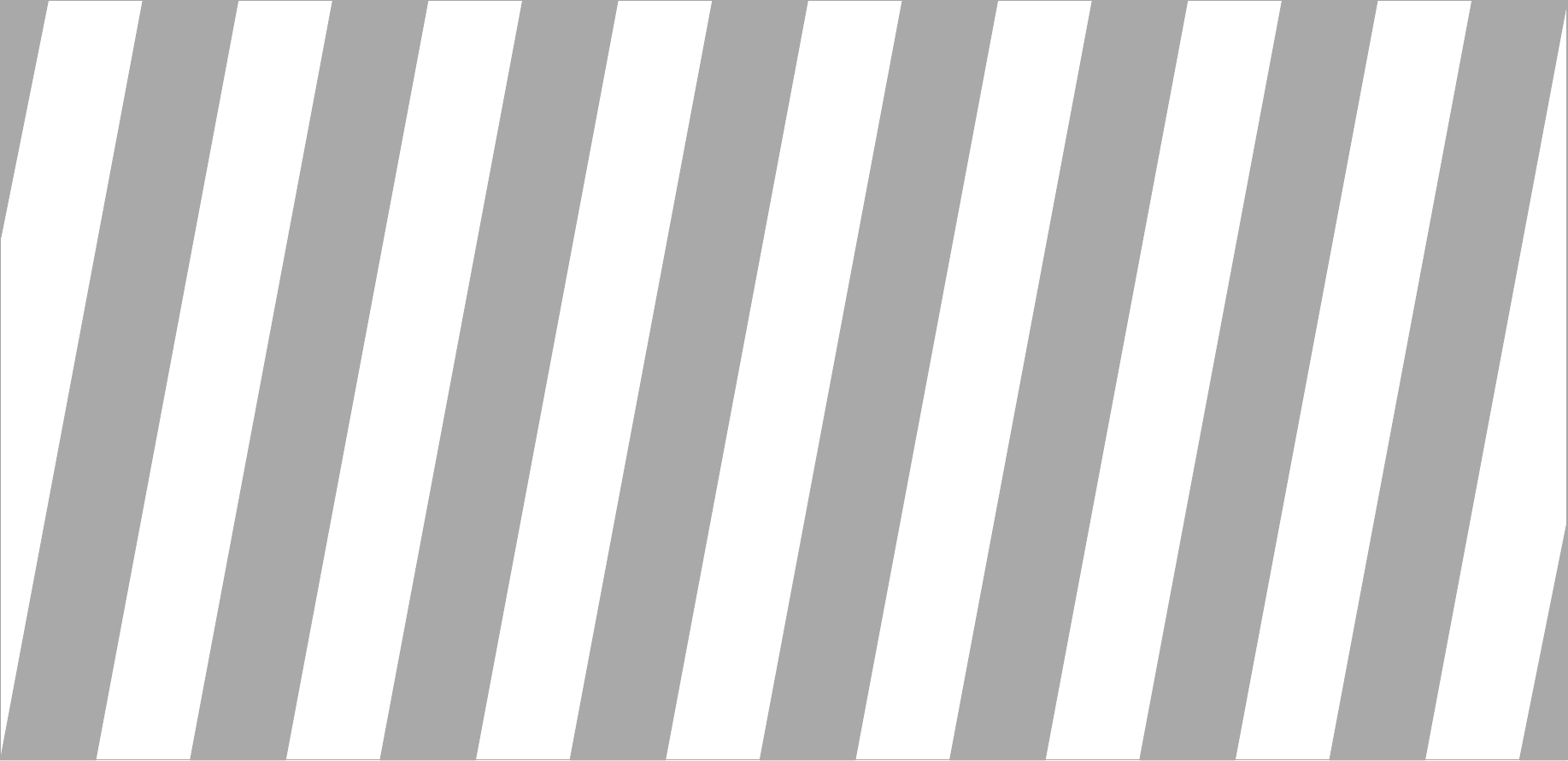}
        \caption{A stripes pattern}
        \label{figure:stripes}
\end{figure}

We start with the following simple but useful observation about stripe patterns.
The easy verification is left to the reader.

\begin{observation}\label{observation:stripes}
Let $S$ be a stripe pattern, and let $\ell$ and $\ar$ be the two lines
delimiting one of the stripes in $S$. Then, for every $a \in \ell$, $b \in \ar$, 
and $v=b-a$, it holds that:
\begin{enumerate}
\item $S \circplus \{0,v\} ~=~ S \sumodd (S+v) ~=~ \mathbb{R}^2$.
%
\item $S \circplus \{0,\half v\} = S \sumodd (S+\frac{1}{2}v)$ is a stripe pattern 
with the same direction as $S$, whose width is equal to half of the width 
of $S$.  
\end{enumerate}
\end{observation}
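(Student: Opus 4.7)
The plan is to reduce to an explicit coordinate model and then verify both parts by direct computation. Since any (nonsingular) affine image of a stripe pattern is again a stripe pattern, and since the operator $\sumodd$ commutes with affine transformations of $\R^2$, I may choose coordinates in which
\[
S \;=\; \{(x,y)\in \R^2 \,:\, \lfloor y/w\rfloor \text{ is even}\},
\]
with the stripe $\{(x,y):0\le y<w\}$ delimited by $\ell = \{y=0\}$ and $\ar = \{y=w\}$. The crucial symmetry of this model is that $S$ is invariant under every horizontal translation $(x,y)\mapsto(x+t,y)$. Given $a\in\ell$ and $b\in \ar$, the vector $v=b-a$ has the form $(v_x, w)$, and consequently $S+v = S+(0,w)$ and $S+\tfrac{1}{2}v = S+(0,w/2)$; the horizontal components have no effect.

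For part (1), the set $S+(0,w)$ is precisely $\{(x,y):\lfloor y/w\rfloor \text{ is odd}\}$, which is the complement of $S$. Hence $S$ and $S+v$ partition $\R^2$ (up to the measure-zero boundary lines, which the paper has declared we ignore), so $S\sumodd (S+v) = \R^2$.

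For part (2), I would carry out a short case analysis based on the fractional part of $y/w$. Writing $y = nw + r$ with $n=\lfloor y/w\rfloor$ and $r\in[0,w)$, the point $(x,y)$ lies in $S$ iff $n$ is even, while it lies in $S+(0,w/2)$ iff $\lfloor (y-w/2)/w\rfloor = n-1$ (when $r<w/2$) or $n$ (when $r\ge w/2$) is even. Comparing the two cases shows that $(x,y)$ belongs to the symmetric difference exactly when $r\in[0,w/2)$. Therefore
\[
S \sumodd \bigl(S+\tfrac{1}{2}v\bigr) \;=\; \bigl\{(x,y)\in\R^2 \,:\, y - w\lfloor y/w\rfloor \in [0,w/2)\bigr\},
\]
which is a stripe pattern in the same (horizontal) direction as $S$, of width $w/2$. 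Undoing the affine change of coordinates preserves both the direction and the width-halving, yielding the claim.

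There is no real obstacle here: once the coordinate reduction exploits the translation-invariance of $S$ along the stripe direction, both statements collapse to a one-dimensional observation about the $\pm1$-valued periodic function $(-1)^{\lfloor y/w\rfloor}$, namely that shifting its argument by $w$ negates it (part 1) while shifting by $w/2$ produces a function of period $w$ (part 2). The only thing worth stating carefully is the measure-zero convention around the boundary lines of the stripes, but this is already built into the paper's framework.
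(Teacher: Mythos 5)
The paper offers no proof here: it states that ``the easy verification is left to the reader.'' So there is nothing to compare against, and the question is simply whether your verification is correct. It is. The reduction to the normalized model $S=\{(x,y):\lfloor y/w\rfloor\text{ even}\}$, with $\ell=\{y=0\}$ and $\ar=\{y=w\}$, is legitimate because $\sumodd$ commutes with any affine bijection $T$ and translations pull back linearly ($T(A+v')=T(A)+Dv'$, $D$ the linear part), so once the two claims are checked in the model they transport back to the original $S$; and since affine maps rescale perpendicular distances between parallel lines by a constant factor per direction, both ``same direction'' and ``width halved'' survive the transport. Your use of the translation invariance of $S$ along the stripe direction to discard the horizontal component of $v$ is exactly the right simplification, and the ensuing case analysis on the fractional part $r=y-w\lfloor y/w\rfloor$ is correct: in part (1), $S$ and $S+(0,w)$ are literal complements, and in part (2) the symmetric difference is $\{(x,y):r\in[0,w/2)\}$, a stripe pattern of width $w/2$. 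One cosmetic remark: your computation tacitly fixes the labeling so that $v$ has positive $y$-component; the opposite labeling gives the complementary half-width pattern $\{r\in[w/2,w)\}$, which is also a stripe pattern of width $w/2$ in the same direction, so the conclusion is unchanged either way. Nothing further is needed.
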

The main result of this section is:
\begin{lemma}\label{lemma:stripes}
Let $S_{1}, \ldots, S_{k}$ be stripe patterns with pairwise distinct directions,
and let $T=S_{1} \sumodd \cdots \sumodd S_{k}$.
Then, there exists a finite (and efficiently computable) set of vectors 
$U\subset \R^2$, $|U| \leq 2^{k-1}$,  such that 
$\,T \circplus U = \Sumodd_{u_i\in U}\,(T+u_{i}) = \R^2$, up to a set of measure $0$. 
\end{lemma}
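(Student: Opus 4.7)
My plan is to prove the lemma by induction on $k \geq 2$, relying on the distributivity of $\circplus$ over $\sumodd$ (equation~\eqref{eq:distrib}) together with Observation~\ref{observation:stripes}. (For $k=1$ the claim would require $|U|\le 1$, which is not enough to cover $\R^2$; the statement should thus be read as concerning the case $k \geq 2$, with the case $k=1$ handled directly by Observation~\ref{observation:stripes}(1) with $|U|=2$.)

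For the base case $k = 2$, let $n_i$ be perpendicular to $d_i$. Since $d_1 \ne d_2$, I can choose a vector $v$ parallel to $d_2$ with $\langle v, n_1 \rangle = w_1$. Then $v$ is a translation symmetry of $S_2$, so $S_2 \circplus \{0, v\} = \emptyset$, while $S_1 \circplus \{0, v\} = \R^2$ by Observation~\ref{observation:stripes}(1). Distributing $\circplus$ over $\sumodd$ gives $T \circplus \{0, v\} = \R^2 \sumodd \emptyset = \R^2$, so $U = \{0, v\}$ has $|U| = 2 = 2^{k-1}$.

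For the inductive step with $k \geq 3$, my goal is to choose a vector $v$ such that $T \circplus \{0, v\}$ becomes the symmetric difference of $k-1$ stripe patterns with pairwise distinct directions, which then falls under the inductive hypothesis. Concretely, I look for $v$ satisfying: (a) $v$ is a lattice-period of $S_k$, so that $S_k \sumodd (S_k + v) = \emptyset$; and (b) $\langle v, n_j \rangle \equiv w_j/2 \pmod{w_j}$ for each $j < k$, which by Observation~\ref{observation:stripes}(2) ensures that $S_j' := S_j \sumodd (S_j + v)$ is itself a stripe pattern of direction $d_j$ and width $w_j/2$. Given such a $v$, the inductive hypothesis yields a set $U'$ with $|U'| \leq 2^{k-2}$ and $(T \circplus \{0, v\}) \circplus U' = \R^2$, and taking $U := \{0, v\} \circplus U'$ gives $|U| \leq 2^{k-1}$ together with $T \circplus U = \R^2$.

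The hard part will be establishing the existence of such a $v$. Conditions (a) and (b) together impose $k$ affine/lattice constraints on the 2-dimensional vector $v$, which is a priori over-determined for $k \geq 3$. Overcoming this, the technical core of the proof, will likely require one of the following: relaxing the strict period condition in (a) to a half-period crossing condition (so that $S_k \sumodd (S_k + v) = \R^2$ rather than $\emptyset$) and tracking the resulting complement through the recursion, exploiting the fact that $|U'|$ is even at every inductive stage for $k-1 \geq 2$; or exploiting the discrete structure of the period lattice $L_k$ together with the continuous freedom along the directions $d_i$ to find a single compatible $v$; or restructuring the step to peel off several stripe patterns at once. Throughout, careful bookkeeping of intermediate stripe widths will be needed to preserve the bound $|U| \leq 2^{k-1}$.
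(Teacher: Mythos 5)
Your setup of the base cases and the bookkeeping $U = \{0,v\} \circplus U'$ are fine, and your observation that the claimed bound $|U|\le 2^{k-1}$ fails literally for $k=1$ is a valid (minor) catch. But the inductive step has the genuine gap you yourself flagged, and your proposal stops exactly where the paper's real idea begins. Your conditions (a) and (b) demand that a single $v$ simultaneously make $S_k$ disappear \emph{and} turn each of $S_1,\ldots,S_{k-1}$ into a half-width stripe pattern in its own direction; that is $k$ scalar constraints on a two-dimensional vector, and for $k\ge 3$ no such $v$ exists in general. None of your three suggested escape routes, as stated, resolves this: even after replacing (a) by a half-period condition, you are still asking $k-1$ independent stripe patterns to all remain stripe patterns after the same translation, which is just as over-determined.

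The paper's fix is to \emph{strengthen the induction hypothesis} rather than to strengthen the choice of $v$. It proves Lemma~\ref{lemma:stripes2}, which allows the summand in direction $d_i$ ($i\ge 2$) to be $S_i\circplus Z_i$ for an arbitrary finite nonempty $Z_i$, with only $Z_1=\{0\}$ required. In the inductive step one picks $v_k$ satisfying just \emph{two} scalar constraints: $\ell_k+v_k=r_k$ (so $S_k\circplus\{0,v_k\}=\R^2$, hence $(S_k\circplus Z_k)\circplus\{0,v_k\}\in\{\emptyset,\R^2\}$) and $\ell_1+2v_k=r_1$ (so $S_1\circplus\{0,v_k\}$ is a half-width stripe pattern in direction $d_1$, which after $\sumodd$-ing with $\emptyset$ or $\R^2$ is still such a pattern). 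The middle terms $S_2,\ldots,S_{k-1}$ are \emph{not} required to become stripe patterns; they simply absorb $\{0,v_k\}$ into their $Z_i$'s, i.e.\ $Z_i' := Z_i\circplus\{0,v_k\}$. This leaves an expression of the form $S_1'\sumodd\bigoplus_{i=2}^{k-1}(S_i\circplus Z_i')$ in only $k-1$ directions, to which the (strengthened) hypothesis applies. So two constraints suffice at every step, $v_k$ always exists, and the recursion $U=U'\circplus\{0,v_k\}$ gives $|U|\le 2^{k-1}$. To repair your proof you would need to prove this stronger statement (or an equivalent reformulation); proving Lemma~\ref{lemma:stripes} as written by direct induction does not go through.
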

It will be technically more convenient to prove the following 
more general statement:
\begin{lemma}\label{lemma:stripes2}
Let $S_{1}, \ldots, S_{k}$ be stripe patterns with pairwise distinct directions,
and let $\{Z_i \}_{i=1}^k$ be a family of finite nonempty subsets of $\mathbb{R}^2$, 
with $Z_1 = \{0\}$.
Let $T=\Sumodd_{i=1}^{k}(S_{i} \circplus Z_i)$.~
Then, as before, there exists a finite (and efficiently computable) set of vectors 
$U \subset \R^2$,  $|U| \leq 2^{k-1}$, such that 
$\,T \circplus U = \Sumodd_{u_i \in U}\,(T+u_{i}) = \R^2$, up to a set of measure $0$. 
\end{lemma}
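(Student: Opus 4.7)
The plan is to prove Lemma~\ref{lemma:stripes2} by induction on $k$, with the base case at $k=2$ (which I verify directly) and the reduction step $k-1\to k$ for $k\ge 3$. The degenerate case $k=1$, in which the stated bound $2^{k-1}=1$ is inconsistent with the trivial fact that a single translate of $T=S_1$ cannot cover $\R^2$, is handled separately by Observation~\ref{observation:stripes}(1): take $U=\{0,v_1\}$ with $v_1$ spanning a stripe of $S_1$.

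For the base case $k=2$, I would take $v$ to be a vector parallel to the direction of $S_2$'s stripes (so $v$ is a period of $S_2$, i.e.\ $S_2+v=S_2$) and of the length for which $v$ crosses a full stripe of $S_1$ (that is, $v\cdot n_1=w_1$, where $n_1$ is the unit normal to $S_1$'s direction). Since the two stripe directions are distinct, such a $v$ exists. Then
\[
T\circplus\{0,v\}\;=\;\bigl(S_1\circplus\{0,v\}\bigr)\sumodd\bigl((S_2\circplus Z_2)\circplus\{0,v\}\bigr)\;=\;\R^2\sumodd\emptyset\;=\;\R^2,
\]
where the first summand equals $\R^2$ by Observation~\ref{observation:stripes}(1) and the second vanishes because for each $z\in Z_2$ the pair $(S_2+z)\sumodd(S_2+z+v)$ is empty (as $v$ is a period of $S_2$). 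This achieves $|U|=2=2^{k-1}$.

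For the inductive step $k-1\to k$ with $k\ge 3$, the plan is to kill $S_k$ by a single $\circplus\{0,v\}$ step with $v$ any nonzero vector parallel to the direction of $S_k$. Since $v$ is then a period of $S_k$, the distributive law~(\ref{eq:distrib}) and the same pairwise cancellation as above yield
\[
T\circplus\{0,v\}\;=\;\Sumodd_{i=1}^{k-1}\;S_i\circplus(Z_i\circplus\{0,v\}),
\]
a set of the same form on $k-1$ stripe patterns, with updated index sets $Z_i':=Z_i\circplus\{0,v\}$. Invoking the inductive hypothesis on this reduced set $T'$ produces $U'$ with $|U'|\le 2^{k-2}$ and $T'\circplus U'=\R^2$, and then $U:=\{0,v\}\circplus U'$ satisfies $|U|\le 2\,|U'|\le 2^{k-1}$ together with $T\circplus U=T'\circplus U'=\R^2$.

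The main obstacle is that this reduction does not preserve the normalization $Z_1=\{0\}$: after one step $Z_1'=\{0\}\circplus\{0,v\}=\{0,v\}$, and further iterations grow $|Z_1|$. I would therefore really prove the stronger statement in which $Z_1$ is permitted to be an arbitrary finite nonempty subset of $\R^2$; the condition $Z_1=\{0\}$ in the statement is merely a normalization of the initial instance (ensuring, for example, that $T$ is nonempty and matching the setup of Lemma~\ref{lemma:stripes}). The base case and the inductive step above both work verbatim for arbitrary $Z_1$. A minor side point is to check that each $Z_i'$ remains finite and nonempty; finiteness is immediate, and if some $Z_i'$ happens to be empty (which would require $v$ to be a period of $Z_i$ as a subset of $\R^2$), then the $i$-th summand simply drops out of $T'$ and the reduced problem only becomes easier.
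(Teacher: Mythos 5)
Your inductive step is wrong, and the ``stronger statement'' you invoke to save it is false. The problem is a parity obstruction that you did not check.

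In your reduction you pick $v$ to be a period of $S_k$, so that $S_k\circplus\{0,v\}=\emptyset$; this pushes $\{0,v\}$ onto every $Z_i$, and in particular replaces $Z_1=\{0\}$ by $Z_1'=Z_1\circplus\{0,v\}=\{0,v\}$. You assert that the base case ($k=2$) works ``verbatim for arbitrary $Z_1$''. It does not: with your choice of $v$ (period of $S_2$, normal component $w_1$) the first summand of $T\circplus\{0,v\}$ is
\[
\bigl(S_1\circplus Z_1\bigr)\circplus\{0,v\}\;=\;\bigl(S_1\circplus\{0,v\}\bigr)\circplus Z_1\;=\;\R^2\circplus Z_1,
\]
which equals $\R^2$ only when $|Z_1|$ is odd, and equals $\emptyset$ when $|Z_1|$ is even. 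And $|Z_1\circplus\{0,v\}|=|Z_1\triangle(Z_1+v)|=2\bigl(|Z_1|-|Z_1\cap(Z_1+v)|\bigr)$ is \emph{always} even, so after a single application of your reduction you are already in the bad case and your $k=2$ argument produces $\emptyset$, not $\R^2$. No choice of period of $S_k$ can fix this, since the parity defect is forced. (The paper flags exactly this danger: the remark following the lemma, with the example $T=S_1\circplus\{0,\tfrac{2}{3}v\}$, shows that the hypothesis $Z_1=\{0\}$ cannot simply be relaxed.)

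The paper's inductive step sidesteps the issue by choosing $v_k$ with two simultaneous properties: $\ell_k+v_k=\ar_k$, so that $S_k\circplus\{0,v_k\}=\R^2$ (not $\emptyset$), and $\ell_1+2v_k=\ar_1$, so that $S_1\circplus\{0,v_k\}$ is again a stripe pattern (of half the width). The $k$-th summand then contributes $\R^2\circplus Z_k$, which is either $\emptyset$ or $\R^2$; $\oplus$-ing it with the new stripe pattern $S_1\circplus\{0,v_k\}$ yields a stripe pattern or its complement, i.e.\ again a single clean stripe pattern $S_1'$ with the implicit $Z_1'=\{0\}$. This is what preserves the normalization through the induction. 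Your plan to ``kill $S_k$ to $\emptyset$'' rather than ``explode it to $\R^2$'' discards exactly this mechanism, so the induction cannot close. (Your observation that the stated bound $2^{k-1}$ misfires at $k=1$ is correct, but that is a minor slip shared by the paper and does not affect $k\ge 2$.)
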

Lemma~\ref{lemma:stripes} follows from Lemma~\ref{lemma:stripes2}
by setting $Z_i =\{0\}$ for all $i\geq 2$.

Notice the special role of $S_{1}$ in the statement of Lemma~\ref{lemma:stripes2}.
In fact, the condition $Z_{1}=\{0\}$ is essential even for $k=1$. It is easy to verify that, using the notation of Observation~\ref{observation:stripes}, no finite set of translates of the set 
$T = S_1 \circplus \{0,\frac{2}{3}v\}$ can oddly cover the plane\footnote{Perhaps expectedly,  the same $T$ has also the complementary extremal property: $T \circplus \left\{0, \frac{1}{3}v, \frac{2}{3}v \right\} = \emptyset$.}.

\begin{proof} {\em (of Lemma \ref{lemma:stripes2})~~}
For every $i=1,2,\ldots, k$, let $\ell_i$ and $\ar_i$ denote the two parallel 
lines delimiting some stripe in $S_{i}$. By the assumptions of the Lemma, 
for different $i$'s these have different directions, and therefore
intersect.

The proof proceeds by induction on $k$.
 
For $k=1$, the statement follows from Observation~\ref{observation:stripes}(1). 

For $k=2$, let $a$ and $b$ be the intersection points of $\ell_1$ and $\ar_1$ 
with $\ell_2$, respectively. Setting $v_2=b-a$, we have 
$S_{1} \circplus \{0,v_2\}=\R^2$, by Observation~\ref{observation:stripes}(1).
Moreover, since $v_{2}$ has the same direction as of $S_{2}$, we have
$S_{2} \circplus \{0,v_2\}=\emptyset$. Keeping this in mind we have:
\[
T \circplus \{0,v_2\} ~=~ \left(S_1 \sumodd (S_2 \circplus Z_2)\right) \circplus \{0,v_2\}
~=~ \left(S_1 \circplus \{0,v_2\} \right) \sumodd 
\left(S_2 \circplus Z_2 \circplus \{0,v_2\}\right) ~= 
\]
\[
=~ \R^2 \sumodd \left(S_2 \circplus \{0,v_2\}\circplus Z_2 \right) ~=~ 
\R^2 \sumodd (\emptyset \circplus Z_2) ~=~ \R^2 \sumodd \emptyset ~=~ \R^2\,.
\]

For $k > 2$, we proceed as follows. Let $v_k$ be the (well-defined) vector such that, 
on the one hand, $\ell_k + v_k = \ar_k$, and on the other hand, $\ell_1 + 2v_k = \ar_1$.
Observation~\ref{observation:stripes}(1) implies that  
$S_{k} \circplus \{0,v_k\} = \R^2$, and hence 
$(S_{k} \circplus Z_k) \circplus \{0,v_k\}$ equals $\R^2 \circplus Z_k$, which is 
either $\emptyset$ or $\R^2$, depending on the parity of $Z_k$.
Observation~\ref{observation:stripes}(2) implies that $S_{1} \circplus \{0, v_k\}$ is a 
stripe pattern with the same direction as $S_{1}$, and half its width. 
Consequently,  
~$
\left(S_1 \circplus \{0,v_k\}\right) \sumodd \left( (S_{k} \circplus Z_k) \circplus \{0,v_k\}\right)
$\,
is a stripe pattern with the same direction as $S_{1}$ and half its width as well.

Consider now the set $T'=T \circplus \{0,v_k\} = T \sumodd (T+v_k)$. 
Using the properties of the operators $\sumodd$ and $\circplus$, one gets:
\begin{eqnarray}
\label{eq:XOR}
T'&=& T \circplus \{0,v_k\}  ~= ~ \left(\Sumodd_{i=1}^{k} (S_{i} \circplus Z_i) \right) 
\circplus \{0,v_k\} ~=~ \Sumodd_{i=1}^{k} (\,S_{i} \circplus Z_i \circplus \{0,v_k\}\,) 
\end{eqnarray}
As we have just seen, the $\sumodd$ of the first and the $k$'th terms of the latter sum 
is a stripe pattern $S'_1$ with the same direction as $S_{1}$. Thus, setting 
$Z_i^{'} = Z_i \circplus  \{0,v_k\}$, one arrives at
\begin{equation}\label{eq:T'2}
T' ~=~ S'_{1} ~\sumodd~ \Sumodd_{i=2}^{k-1} (S_{i} \circplus Z'_i)
\end{equation}

By the induction hypothesis applied to $T'$, there exists a finite
set $U' \subset \R^2$ such that $T' \circplus U' = \R^2$ up to a set of measure $0$. 
However,
\begin{equation}\label{eq:T'3}
T' \circplus U'  ~=~ T  \circplus \{0,v_k\} \circplus U' ~=~ T \circplus (U' \circplus \{0,v_k\}) 
\end{equation}
Therefore, setting $U= U' \,\circplus\, \{0,v_k\}$, one concludes that 
\,$T \circplus U =  T' \circplus U' = \R^2$. This completes the construction of the desired set $U$.

It remains to estimate the size of $U$. The recursive definition~ $U = U' \,\circplus\, \{0,v_k\}$ for $k> 2$, combined with the base cases $|U|=2^{k-1}$ for $k=1,2$, implies
the desired bound: $|U| \leq 2^{k-1}$. 
\end{proof}
%
\section{Odd Covers by Rational Polygons: A Special Case}
In this section we prove our main theorem for the special case of  
rational polygons with no two parallel edges.

Given a rational polygon $\P$, let $\P_{\rm INT}$ be the integer polygon with
minimal area affinely equivalent to $\P$, and let $A_{\rm INT}(\P) = A(\P_{\rm INT})$
be its area.
\begin{theorem}\label{theorem:pnp}
Let $\P$ be a rational polygon with $k$ vertices, and no parallel edges.
Then, there exists a bounded degree odd cover $\cal F$ of $\R^2$ by translates of $\P$
with density $\rho({\cal F}) \leq A_{\rm INT}(\P)\cdot 2^{k-1}$.
Consequently,~
$\alpha^\circ (\P) \geq A_{\rm INT}(\P)^{-1}\cdot 2^{-(k-1)}$.
\end{theorem}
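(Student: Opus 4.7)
The plan has three main steps: reduction to an integer polygon via affine invariance, a ``jump-line'' decomposition of $\chi_{P \circplus \Z^2}$ as a XOR of stripe patterns in the $k$ edge directions, and an application of Lemma~\ref{lemma:stripes}. First I would apply an affine map sending $P$ to $P_{\rm INT}$ (using that $\alpha^\circ$ and $\theta^\circ$ are affine invariants), so henceforth WLOG $P$ has integer vertices and $A(P) = A_{\rm INT}(P)$. The ``consequently'' clause follows from the density bound via Lemma~\ref{lemma:beta}, so it suffices to build the odd cover. I would aim for $\F = \{P + z + u : z \in \Z^2,\, u \in U\}$ with $|U| \leq 2^{k-1}$; by Claim~\ref{eq:dense} its density equals $|U| \cdot A(P) \leq 2^{k-1}\,A_{\rm INT}(P)$, and the odd-cover condition reduces to $T \circplus U = \R^2$ up to measure zero, where $T := P \circplus \Z^2$.

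The heart of the argument is to express $T$ as a XOR of $k$ stripe patterns so that Lemma~\ref{lemma:stripes} produces~$U$. For this I would use the identity $\chi_T(x,y) \equiv |\Z^2 \cap ((x,y)-P)| \pmod 2$, interpreting $\chi_T$ as the parity of the lattice-point count in the translated-reflected polygon $(x,y)-P$. This parity function is locally constant and jumps only when the boundary of $(x,y)-P$ sweeps past a lattice point. For each edge $e_i$, these ``jump events'' occur along lines parallel to $e_i$ at regular perpendicular spacing $1/|e_i'|$, where $e_i'$ is the primitive integer vector along $e_i$; this regularity is a direct consequence of $P$ having integer vertices. Moreover, at each such jump line exactly $g_i := \gcd(e_i)$ lattice points cross the edge simultaneously (all entering together or all leaving together), so the parity flips by $g_i \pmod 2$ uniformly along the family of jump lines for edge~$i$. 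Since the no-parallel-edges hypothesis ensures the $k$ edges contribute jump lines in $k$ distinct directions, I would obtain a decomposition
\[
\chi_T ~\equiv~ c_0 \,+\, \sum_{i=1}^k \chi_{S_i} \pmod 2,
\]
where each $S_i$ is a stripe pattern in direction $e_i$ of width $1/|e_i'|$ (trivial and discarded whenever $g_i$ is even), and $c_0 \in \{0,1\}$. The constant $c_0$ can be absorbed by replacing one $S_i$ with its stripe-complement, so WLOG $c_0 = 0$ and $T = \Sumodd_{i=1}^k S_i$.

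Applying Lemma~\ref{lemma:stripes} to this stripe decomposition then produces the promised $U$ with $|U| \leq 2^{k-1}$ and $T \circplus U = \R^2$, completing the construction of $\F$. The density bound gives $\theta^\circ(P) \leq 2^{k-1}\,A_{\rm INT}(P)$, which via Lemma~\ref{lemma:beta} yields the claimed bound on $\alpha^\circ(P)$. I anticipate the main technical obstacle to be verifying the uniformity of the parity jump across all jump lines in each direction --- that is, that every jump line associated to edge $e_i$ flips parity by the \emph{same} amount modulo $2$, so that the cumulative contribution is genuinely a stripe pattern rather than some more complex periodic function. Establishing this uniformity should combine the integrality of $P$'s vertices (which fixes the spacing and the lattice-point count per edge crossing) with the no-parallel-edges hypothesis (which ensures each jump-line family can be attributed unambiguously to a single edge).
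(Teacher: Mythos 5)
Your proposal follows essentially the same route as the paper: reduce to $P_{\rm INT}$ by affine invariance, show that $T = P \circplus \Z^2$ is a finite $\oplus$-sum of stripe patterns (one per edge direction), and then invoke Lemma~\ref{lemma:stripes} to produce $U$ with $|U|\le 2^{k-1}$, followed by Claim~\ref{eq:dense} and Lemma~\ref{lemma:beta}. Your ``jump-line'' description of $\chi_T$ as the parity of $|\Z^2\cap\big((x,y)-P\big)|$, with jumps along the lattice-supported lines in each edge direction, is the same observation the paper makes by noting $P\circplus\Z^2$ is a union of cells of $\R^2\setminus\bigcup_e L_e$ and analyzing which lines flip parity. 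The obstacle you flag at the end --- that the parity jump is uniform along each family of jump lines --- is exactly the point the paper verifies by counting the translates $P+z$ whose boundary contains a fixed $1$-dimensional edge $I$ of the arrangement, and your count of exactly $g_i=\gcd(e_i)$ lattice points crossing at once is correct for generic $(x,y)$ and matches the paper's half-open-edge count.

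There is, however, one genuine gap: you never rule out the degenerate case $T=\emptyset$. In your notation this is the case where \emph{every} $g_i$ is even (so every $S_i$ is trivial and discarded) and $c_0=0$. If that happens, there is no stripe pattern left, your ``absorb $c_0$ by complementing one $S_i$'' step has nothing to operate on, Lemma~\ref{lemma:stripes} cannot be applied, and indeed no finite $U$ can satisfy $\emptyset\circplus U=\R^2$. This is precisely where the minimality of $P_{\rm INT}$ --- which you invoke in the reduction but never actually use --- must come in. The paper's argument: after translating a vertex to the origin, if every edge of $P$ had an odd number of lattice points (equivalently every $g_i$ even), then every edge vector would have both coordinates even, so all vertices of $P$ would be $\equiv(0,0)\pmod 2$; but then $\tfrac12 P$ would be an integer polygon affinely equivalent to $P$ with strictly smaller area, contradicting the definition of $P_{\rm INT}$. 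Hence at least one $g_i$ is odd, at least one $S_i$ survives, and your decomposition is nonempty. Adding this paragraph closes the gap and makes your proof complete.
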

Before starting with the proof, we need one more observation about the structure of
$\sumodd$-sums of stripe patterns.
For $i=1,\ldots\,r$, let $L_{i}$ be an affine image 
of the family of parallel lines~$\{ (x,y) \in \R^2 \,|\, y \in\mathbb{Z}\}$. 
Respectively, let $S_i$ be a stripe pattern whose boundary is $L_i$.
(Notice that there are exactly two such stripe patterns: $S_i$ and its complement
$\overline{S}_{i} = \R^2 \setminus S_i$.)
Assume that $S_1, \ldots, S_{r}$ have pairwise distinct directions. 
The union of all these lines $\bigcup_{i=1}^{r}L_i$ partitions $\R^2$ into pairwise disjoint open 
cells, each cell being a convex polygon. Call two cells {\em adjacent} if they 
share a $1$-dimensional edge. 

It is a folklore to show that the cells of $\R^2 \setminus \bigcup_{i=1}^{r} L_i$
can be $2$-colored in such a way that any two adjacent cells have different colors.

\begin{claim}\label{claim:stripes2}
Let $T$ be the union of all cells of $\R^2 \setminus \bigcup_{i=1}^{r} L_i$ in one
color class. Then, (up to the $0$-measure boundary of $T$, i.e., $\bigcup_{i=1}^r L_i$)
either $T=S_{1} \sumodd \cdots \sumodd S_{r}$, or 
$T= \R^2 \setminus (S_{1} \sumodd \cdots \sumodd S_{r}) =
\overline{S}_{1} \sumodd S_2 \sumodd\cdots \sumodd S_{r}$.
\end{claim}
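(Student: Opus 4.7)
The plan is to interpret the sum $S_1 \sumodd \cdots \sumodd S_r$ pointwise via parities, and then recognize it as a proper $2$-coloring of the cell complex induced by $\bigcup_i L_i$.

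First I would observe that, by the very definition of $\sumodd$, for any $p \in \R^2 \setminus \bigcup_i L_i$ one has $p \in S_1 \sumodd \cdots \sumodd S_r$ if and only if the integer $N(p) = |\{i : p \in S_i\}|$ is odd. Since the stripe patterns $S_i$ have boundary exactly $L_i$, the function $N(\cdot)$ is constant on each open cell of the arrangement $\R^2 \setminus \bigcup_i L_i$; hence the set $S_1 \sumodd \cdots \sumodd S_r$ is, up to a set of measure zero, a union of such cells.

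Next I would analyze what happens between two adjacent cells $C$ and $C'$ sharing a $1$-dimensional edge $e$. Because the directions of the $L_i$ are pairwise distinct, the relative interior of $e$ lies on exactly one of the families $L_{i_0}$, and is disjoint from all the others. Crossing $e$ from $C$ into $C'$ therefore flips membership in $S_{i_0}$ while leaving membership in every other $S_j$ unchanged. Consequently, the parity of $N(\cdot)$ flips across every edge of the arrangement. In other words, the function $p \mapsto N(p) \bmod 2$ is itself a proper $2$-coloring of the cell adjacency graph.

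Finally I would use the standard fact that a connected graph admits exactly two proper $2$-colorings, differing by swapping the two colors. Since the dual adjacency graph of a line arrangement in $\R^2$ is connected, and since, by hypothesis, $T$ also comes from a proper $2$-coloring, $T$ must coincide either with $\{p : N(p) \text{ odd}\} = S_1 \sumodd \cdots \sumodd S_r$, or with its complement in $\R^2$, up to the measure-zero set $\bigcup_i L_i$. The complementary case is exactly $\overline{S}_1 \sumodd S_2 \sumodd \cdots \sumodd S_r$, since replacing $S_1$ by $\overline{S}_1 = \R^2 \setminus S_1$ flips the parity of $N$ everywhere. The only mildly delicate point is to check that the arrangement of the lines $\bigcup_i L_i$ really does induce a bipartite cell structure with a connected dual; this is the folklore fact invoked just before the claim, and is easily verified by induction on $r$ (adding one family of parallel lines at a time flips colors on one side and preserves the $2$-coloring property).
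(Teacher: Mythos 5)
Your argument is correct, and it supplies a complete direct proof where the paper only remarks that the claim ``can be formally verified, e.g., by induction on $r$'' and leaves the details to the reader. The heart of your route --- observing that $N(p)=|\{i: p\in S_i\}| \bmod 2$ is itself a proper $2$-coloring of the cell complex, because each $1$-dimensional edge of the arrangement lies in exactly one $L_{i_0}$ (the directions being pairwise distinct) so that crossing it flips exactly one membership --- is a clean structural way to identify $S_1\sumodd\cdots\sumodd S_r$ with a color class, avoiding any induction. The one soft spot is your treatment of connectivity of the dual adjacency graph. This is genuinely load-bearing: the uniqueness-up-to-swap of proper $2$-colorings holds only for a \emph{connected} bipartite graph, and if the dual graph were disconnected there would be additional $2$-colorings that need not coincide with $\pm(N\bmod 2)$. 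Yet the parenthetical you offer (``adding one family of parallel lines at a time flips colors on one side and preserves the $2$-coloring property'') addresses bipartiteness --- which you have already established by exhibiting the explicit coloring $N\bmod 2$ --- and says nothing about connectivity. You should argue connectivity separately, which is easy: given two cells, join interior points by a straight segment perturbed to miss the (locally finite) set of vertices of the arrangement; the segment crosses finitely many lines, and the successive cells it visits form a dual path. With that supplied, the rest of your reasoning closes the proof exactly as you state, and the observation that replacing $S_1$ by $\overline{S}_1$ flips $N\bmod 2$ off $\bigcup_i L_i$ correctly identifies the complementary color class.
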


The claim is rather obvious, and can be formally verified, e.g., 
by induction on $r$. The full details are left to the reader (see Figure~\ref{figure:sss} for an illustration).

\begin{figure}[ht]
    \centering
    \includegraphics[width=11cm]{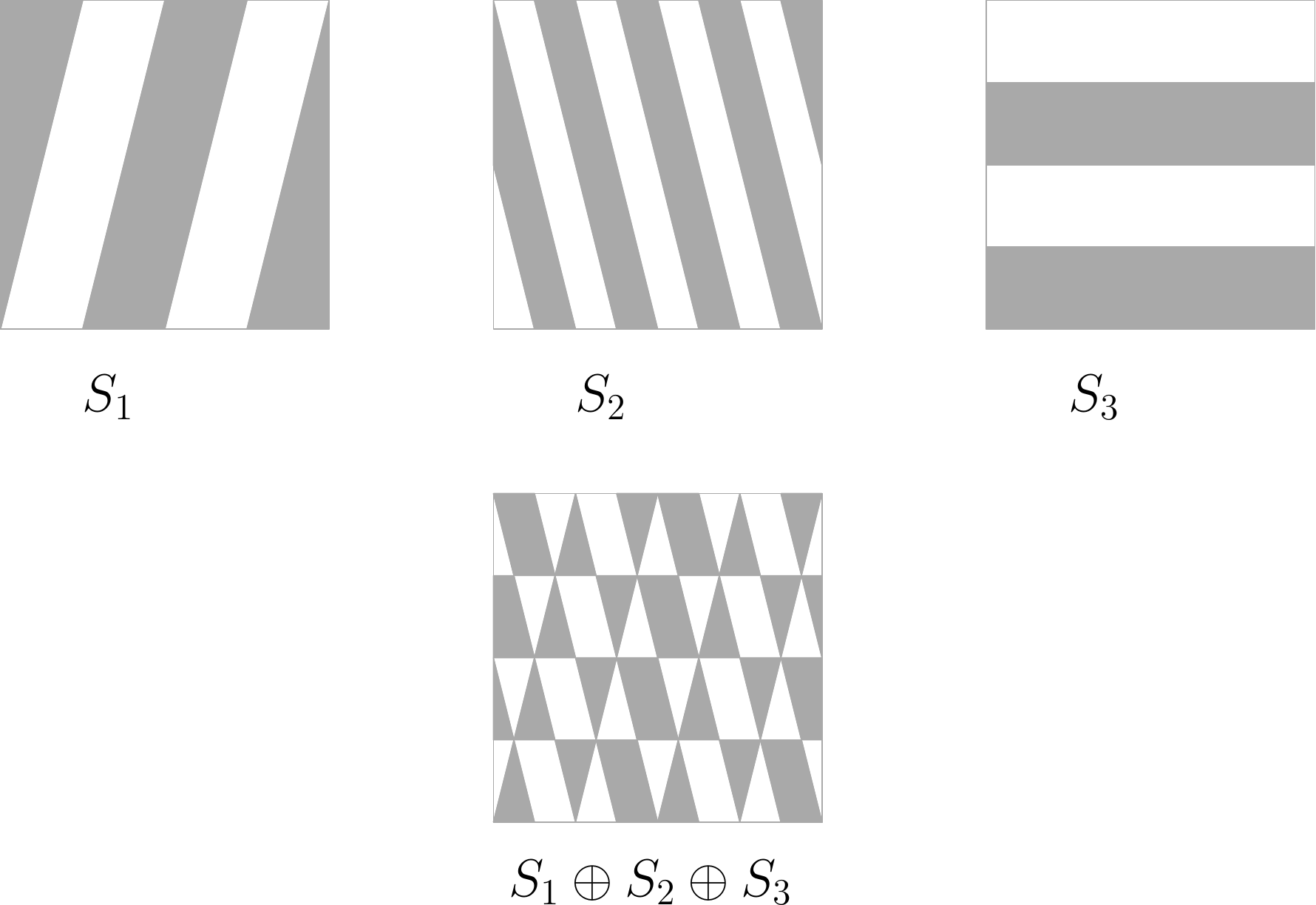}
        \caption{$\oplus$-sum of three stripes patterns}
        \label{figure:sss}
\end{figure}


\begin{proof} {\em (of Theorem~\ref{theorem:pnp})}~~
Keeping in mind that both $\theta^\circ(\P)$ and $\alpha^\circ(\P)$ are affine invariants, one may assume without loss of generality that 
$\P = \P_{\rm INT}$, and that the origin $O=(0,0)$ is a vertex of $\P$. 
Then, all the vertices of $\P$ belong to $\Z^2$. Observe also that some of the edges 
of $\P$ must contain an even number of integer lattice points. 
Otherwise, the coordinates of the vertices of $\P$ would all have 
the same parity, i.e., they would all be even.
Scaling such an all-even $\P$ by a factor of $\half$ would have yielded a smaller integer polygon affinely 
equivalent to $\P$, contrary to the definition of $\P_{\rm INT}$. 

We claim that $\P \circplus \Z^2$ is equal to 
$S_{1} \sumodd \ldots \sumodd S_{r}$, where $S_{1}, \ldots, S_{1}$ are stripe
patterns with pairwise distinct directions, and $r$ is at most the number of 
vertices ($=$edges) of $\P$. Once this claim is established, the rest easily follows.

Indeed, assuming that the claim holds, by Lemma~\ref{lemma:stripes} there exists $U \subset \R^2$ 
with $|U| \leq 2^{r-1}$ such that~ $(\P \circplus \Z^2) \circplus U \;=\; \R^2$. 
Equivalently, the (multi-) family of sets
~$
\F ~=~ \{\P+z+u\}_{\,z\in \Z^2,\,u\in U}
$~
is an odd cover of the plane. To employ the Odd Cover Lemma~\ref{lemma:beta}, one needs to
estimate the density of this cover.
Observe that $\{\P+z\}_{\,z\in \Z^2}\,$ has a bounded maximal degree (being the maximal 
number of integer lattice points in any translate of $\P$), while its average density is $A(\P)$, 
as mentioned in Claim~\ref{eq:dense}. Therefore, the maximal degree of $\F$ is at most $|U|$ times 
the maximal degree of the cover $\{\P+z\}_{\,z\in \Z^2}$, while 
$\rho(\F)$, the average degree of $\F$, is precisely $A(\P)\cdot |U| \leq A(\P) \cdot 2^{k-1}$. Hence,
~$
\theta^\circ(\P) ~\leq~ \rho(\F) ~\leq~ A(\P)\cdot 2^{k-1}\;.
$
Applying the Odd Cover Lemma~\ref{lemma:beta} one gets 
~$\alpha^\circ(\P) ~\geq~ \theta^\circ(\P)^{-1} ~\geq~ A(\P)^{-1}\cdot 2^{-(k-1)}$,~ as needed.

$\mbox{}$
\\
Thus, it is sufficient to show that $\P \circplus \Z^2$ is equal to 
$S_{1} \sumodd \ldots \sumodd S_{r}$ as above. In the remainder of this section,
we shall focus on proving this claim. The {\bf argument} goes as follows.

Let $E(\P)$ denote the set of all edges of $\P$. For $e\in E(\P)$, let $L_{e}$ be the set of all lines 
parallel to $e$ that contain points of $\Z^2$. Clearly, $L_e$ is a discrete set of lines as in
Claim~\ref{claim:stripes2}.  
Consider a point $x \in \R^2$. It belongs to 
$\P \circplus \Z^2$ exactly when $|{\P} \cap (x-\Z^2)|$ is odd.
Unless $x \in \bigcup_{{\footnotesize{e \in E(\P)}}} L_{e}$,
every point $x'$ in a sufficiently small neighborhood of $x$ will 
satisfy $|{\P} \cap (x-\Z^2)|=|{\P} \cap (x'-\Z^2)|$.
Therefore, $\,\P \circplus \Z^2\,$ is a union of cells of 
$\R^2 \setminus \bigcup_{{\footnotesize{e\in E(\P)}}} L_e$. 

Call an edge $e$ of $\P$ {\em active} if it contains an even number of integer 
lattice points, and {\em passive} otherwise. 
Respectively, if $e$ is active, all the lines in 
$L_{e}$ are called {active}, and if it is passive, 
the lines in $L_{e}$ are called {passive}.

Let $C_{1}$ and $C_{2}$ be two adjacent cells in 
$\R^2 \setminus \bigcup_{{\footnotesize{e\in E(\P)}}} L_e$ separated by
a line $\ell \in L_{e}$ for some edge $e$ of $\P$.
We claim that if $e$ is active, then exactly one of $C_{1}$ and $C_{2}$
is contained in $\P \circplus \Z^2$, and if $e$ is passive, then either both 
are contained in $\P \circplus \Z^2$, or none of them is.

Indeed, let $I \subset \ell$ denote the common $1$-dimensional edge
of $C_{1}$ and $C_{2}$. Observe that the only members in the family 
$\F=\{P + z\}_{z \in \Z}$ that distinguish between $C_{1}$ and $C_{2}$, that is,
contain exactly one of the two, are those that contain $I$ in their boundary. 
To get a clearer picture of this subfamily, let $J=[p,q] \subset \ell$ be the smallest interval with 
integer endpoints containing $I$. Notice that $I$ has no integer points in its interior.
Let us view $e$ as a $1$-dimensional interval $[s_e,t_e) \subset \R^2$,
parallel to, and having the same orientation as, $J$ . Then, $P+z$ contains $I$
if and only if $p \in e+z$. Or, equivalently, $p-z\in e$.

This means that when $e$ is active (i.e., it contains an even number of points in $\Z^2$),
$I$ is covered by an odd number of $(P+z)$'s, and when $e$ is passive, it is covered by 
an even number of $(P+z)$'s. Consequently, in the former case the  
degrees of cover of the cells $C_1$ and $C_2$ by $\F$ have a different parity,
whereas in the latter case the parities are equal. Thus, when $e$ is active, $\P \circplus \Z^2$
distinguishes between $C_1$ and $C_2$, and when it is passive, it does not. As claimed.

Let $AE(\P)$ be the (nonempty!) set of active edges of $\P$. 
The conclusion is that $\P \circplus \Z^2$ is a union of cells of 
$\R^2 \setminus \bigcup_{{\footnotesize{e\in AE(\P)}}} L_e$, satisfying
the assumptions of Claim~\ref{claim:stripes2}. Hence, $\P \circplus \Z^2$ is a $\sumodd$-sum 
of stripe patterns, as desired. This completes the proof of Theorem~\ref{theorem:pnp}.
\end{proof}

The assumption that $\P$ has no parallel edges was needed to justify the (tacit)
assumption that for every line $\ell \in \bigcup_{\footnotesize{e \in E(\P)}} L_e$, there is 
a {\em unique} edge $e$ such that any translate of $\P$ may have contained 
in $\ell$.
When there are parallel edges, most of the argument still applies, however, it may
fail at one fine point. The contributions of parallel edges may cancel out, 
leaving no active lines, and resulting in $\P \circplus \Z^2 = \emptyset$. 
Unfortunately, this 
situation indeed does occur for some rational polygons $\P$, for example, for the centrally 
symmetric ones.
To overcome this problem, a more refined family of translates will be constructed. 
%
%
%
%
%
\section{A Theorem About $\Z_2$-valued Functions on Integer Lattices}  
We shall need the following result of an independent interest. It will be proven here 
for any dimension $d$, but used in Section~\ref{sec:main} only with $d=2$.

Let $\A$ be a family of finite subsets of $\Z^d$. A function, or, rather, a weighting, 
$\Fgoth:\Z^d \rightarrow \Z_2$, will be called {\em stable}\, with respect to $\A$, 
if for any $A \in \A$, all integer translates of $A$ have the same $\,\Fgoth$-weight. 
That is, the value of $\,\Fgoth(A + p) = \Sumodd_{x\in A+p} \Fgoth(x)$, 
does not depend on the choice of $p\in \Z^d$, but solely on $A$.\footnote{
In this section, the operator $\sumodd$ that was originally defined on sets, will be
sometimes applied to points. For consistency, regard points as single-element sets.}
Further, call $\Fgoth$ {\em \,$0$-stable}
with respect to $\A$, if it is stable, and moreover, for every  $A \in \A$, $\,\Fgoth(A) = 0$. For example, if the function $\Fgoth$ is everywhere $0$, then it is $0$-stable with 
respect to any family $\A$.  If it is everywhere $1$, it is stable with respect to any family $\A$,
and $0$-stable if $\A$ consists only of sets of even cardinality.
\begin{theorem}\label{theorem:f}
Let $\A$ be a (possibly infinite) 
family of non-empty finite subsets of $\Z^d$, and $\A \neq \emptyset$. 
There exists a function $\Fgoth:\Z^d \rightarrow \Z_2$ that is
stable, but not $0$-stable, with respect to $\A$.
\end{theorem}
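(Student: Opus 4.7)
The plan is to recast the statement as a question in commutative algebra over the group ring $R := \Z_2[\Z^d]$, which is canonically the Laurent polynomial ring $\Z_2[x_1^{\pm 1},\ldots,x_d^{\pm 1}]$. For any finite $A \subseteq \Z^d$ write $\hat{A} := \sum_{a\in A} x^a \in R$. A function $\Fgoth:\Z^d\to\Z_2$ extends to a unique $\Z_2$-linear functional on $R$ via $\Fgoth(x^p):=\Fgoth(p)$, and under this correspondence $\Fgoth(\hat{A}) = \Fgoth(A)$ and $\Fgoth(\widehat{A+p}) = \Fgoth(x^p\hat{A})$. Let $I\subseteq R$ be the augmentation ideal (generated by $\{x^p-1 : p\in\Z^d\}$) and $J\subseteq R$ be the ideal generated by $\{\hat{A} : A\in\A\}$. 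A direct check (using that $(x^p-1)\hat{A}$ is precisely the ``defect from $p$-translation invariance'' of $\hat{A}$) identifies the $\Z_2$-span of $\{(x^p-1)\hat{A} : A\in\A,\,p\in\Z^d\}$ with the ideal $W := IJ$. Hence stability of $\Fgoth$ is equivalent to $\Fgoth$ vanishing on $W$, while $0$-stability is equivalent to $\Fgoth$ vanishing on $J$. Since $W\subseteq J$ automatically, a stable but not $0$-stable $\Fgoth$ exists precisely when $IJ \subsetneq J$: given such a strict inclusion, pick any $A^*\in\A$ with $\hat{A^*}\notin IJ$ and extend a $\Z_2$-basis of $IJ$ through $\hat{A^*}$ to a basis of $R$, defining $\Fgoth$ to be zero on the former and one on $\hat{A^*}$.

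The real work is thus showing $IJ \neq J$, and I plan to do this via Nakayama's Lemma in its determinantal form: for any finitely generated $R$-module $M$ with $IM=M$, some $r\in 1+I$ satisfies $rM=0$. Two structural facts about $R$ make this applicable. First, $R$ is Noetherian---by Hilbert's basis theorem applied to $\Z_2[x_1,\ldots,x_d]$, combined with the preservation of Noetherianity under localization---so even when $\A$ is infinite, $J$ is finitely generated as an $R$-module. Second, $R$ is an integral domain, being a localization of a polynomial ring over a field. Suppose now $IJ=J$. Nakayama yields $r\in 1+I$ with $rJ=0$; since $\A\neq\emptyset$ and $\hat{A}\neq 0$ for each non-empty $A$, we have $J\neq 0$, and the integral domain property forces $r=0$. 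But $r\in 1+I$ combined with $1\notin I$ (the augmentation $\epsilon:R\to\Z_2$, $x^p\mapsto 1$, satisfies $\epsilon(1)=1\neq 0$) contradicts $r=0$, completing the argument.

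The conceptual core of the proof is the group-ring reformulation at the start: once the problem is rephrased as $IJ \subsetneq J$, it falls to standard commutative algebra. The only technical point that requires care is the passage from the possibly infinite family $\A$ to a finite generating set for $J$, and this is exactly what Noetherianity of the Laurent polynomial ring in $d$ variables provides. The remaining ingredients---the determinantal form of Nakayama, the integrality of $R$, and the extension of a $\Z_2$-linear functional---are routine, so I do not foresee further serious obstacles.
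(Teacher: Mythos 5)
Your proposal is correct, and it takes a genuinely different route from the paper's own proof. The paper's argument is explicit and elementary: in dimension one it builds a chain of functions $\fgoth_0, \fgoth_1, \dots$ by a Pascal-like recursion (each $\fgoth_k$ is a discrete anti-difference of $\fgoth_{k-1}$, normalized by $\fgoth_k(0)=1$), proves that $0$-stability of $\fgoth_{k-1}$ promotes to stability of $\fgoth_k$, and rules out that all $\fgoth_k$ are $0$-stable by exhibiting the vanishing window $\fgoth_k(1)=\cdots=\fgoth_k(k)=0$ and evaluating $\fgoth_r$ on a translate of some $A\in\A$ of diameter $r$; the general $d$ is then handled by precomposing with a suitably generic integer linear form $L$. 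Your argument is existential and algebraic: after passing to the group ring $R=\Z_2[\Z^d]=\Z_2[x_1^{\pm1},\dots,x_d^{\pm1}]$, you identify stability with annihilation of $IJ$ and $0$-stability with annihilation of $J$ (where $I$ is the augmentation ideal and $J=(\hat A:A\in\A)$), so the theorem reduces to $IJ\subsetneq J$, which you get from the determinantal (Cayley--Hamilton) form of Nakayama's lemma together with Noetherianity of $R$ and the fact that $R$ is a domain with $1\notin I$. I checked the small technical points and they all hold: the $\Z_2$-span of $\{(x^p-1)\hat A\}$ really equals the ideal $IJ$, since over $\Z_2$ one has $x^q(x^p-1)=(x^{p+q}-1)+(x^q-1)$; adjoining $\{\hat A\}$ pushes the span up to $J$; $J\neq 0$ because $\A$ contains a nonempty set; $J$ is finitely generated because $R$ is a localization of a polynomial ring over a field; and $0\notin 1+I$ because the augmentation homomorphism kills $I$ but not $1$. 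Your route is shorter and conceptually cleaner, localizing the entire difficulty in a standard appeal to commutative algebra, and it makes the equivalence ``stable but not $0$-stable functional exists $\iff IJ\neq J$'' transparent; what it gives up is the completely explicit and effectively bounded construction of $\Fgoth$ that the paper's recursion provides, which is not needed for the bound on $\alpha^\circ$ but does make the odd cover in Theorem~\ref{theorem:main} concrete.
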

\begin{proof}
We start with the $1$-dimensional case, introducing the key construction
to be used in all dimensions. 
\\ \\
{\bf\underline{Case $d=1$}}\\
We define a family $\{{\fgoth}_k\}_{k=0}^\infty$ of functions from $\Z$ to $\Z_2$
in the following recursive manner. We will show that one of this functions
is the desired function $\Fgoth$:
\\$ $ \par
{\bf
$\fgoth_0$ is identically $1$;

For $k>0$, ~~$\fgoth_{k}(0) = 1$, and 
$\fgoth_{k}(t) \,=\, \fgoth_{k}(t-1) \sumodd \fgoth_{k-1}(t-1)$. 
\footnote{
Observe that this recursive formula defines $\fgoth_{k}(t)$ for both positive and negative values 
of $t$. More explicitly, for $t<0$ it becomes $\fgoth_{k}(t) \,=\, \fgoth_{k}(t+1) \sumodd 
\fgoth_{k-1}(t)$, reducing either $k$ or $|t|$ just as for $t>0$. 
}
}
\\ \\
For example, $\fgoth_{\small 1}(t)$ is $1$ if $t$ is even, and $0$ otherwise. The next one, $\fgoth_{\small 2}(t)$,
is $1$ if $t\equiv 0,3~({\rm mod~ 4})\,$, and $0$ otherwise. 
Observe that the repeated application of the recursive formula yields for any $c\in \N$,
\begin{equation}
\label{eq:f}
\fgoth_{k}(t+c) ~=~ \fgoth_{k}(t) \; \oplus\; \Sumodd_{i=0}^{c-1} \fgoth_{k-1}(t+i)\,.
\end{equation}
\begin{claim}\label{claim:f1}
If $\fgoth_{k-1}$ is $0$-stable with respect to a finite $A\subseteq \Z$, 
then $\fgoth_{k}$ is stable with respect to $A$.
\end{claim}
Indeed, it suffices to show that for any $p\in \Z$, 
$\fgoth_{k}(A + p +1) = \fgoth_{k}(A + p)$. By definition of $\fgoth_{k}$,
\[
\fgoth_{k}(A + p +1) ~= \Sumodd_{t\in A+p+1} \fgoth_{k}(t) ~=
\Sumodd_{t\in A+p+1} \fgoth_{k}(t-1) \;\sumodd \Sumodd_{t\in A+p+1} \fgoth_{k-1}(t-1)
~=~ \fgoth_{k}(A+p) \;\sumodd \;\fgoth_{k-1}(A+p)~.
\]
Since $\fgoth_{k-1}(A+p)=0$ by assumptions of the claim, one concludes that 
$\fgoth_{k}(A + p +1) = \fgoth_{k}(A + p)$.
\begin{claim}\label{claim:f2}
For any $k\geq 1$, ~$\fgoth_{k}(t)=0$\; for\; $1\leq t\leq k$.
\end{claim}
Indeed, apply induction on $k$.
For $k=1$, ~$\fgoth_{1}(1) = \fgoth_{1}(0) \sumodd \fgoth_{0}(0) = 1\sumodd 1=0$.
For $k>1$, using (\ref{eq:f}), one concludes that for any $t$ in the range,
\[
\fgoth_{k}(t) ~=~ \fgoth_{k}(0) \sumodd \fgoth_{k-1}(0) \sumodd \fgoth_{k-1}(1) \sumodd \ldots \sumodd \fgoth_{k-1}(t-1) ~=~ 1\sumodd 1\sumodd 0 \sumodd \ldots \sumodd 0 ~=~ 0.
\]

We proceed to show that one of $\fgoth_k$'s satisfies the requirements of the theorem.
Observe that $\fgoth_0$ is stable with respect to $\A$. By Claim~\ref{claim:f1}, 
either there exists $k\geq 0$ such that $\fgoth_k$ is stable, but not $0$-stable (precisely as desired), or all $\fgoth_k$'s are $0$-stable. However, the latter situation does not
occur. Consider any nonempty $A\in \A$, and let 
$a=\min(A)$, $b=\max(A)$, $r=b-a$. 
Then, by Claim~\ref{claim:f2}, ~$\fgoth_r(A-a)=1$, and thus $\fgoth_r$
is not $0$-stable. This completes the case $d=1$.
\\ \\
{\bf\underline{General Case}}\\
Let $L$ be a linear function (without a constant term) from $\Z^d$ to $\Z$ that satisfies two requirements.
The first requirement is that the coefficient of $x_1$ in $L$ is 1. The second requirement
is that for some $A\in \A$, ~$L$ attains a minimum on $A$ at a unique point.
Such $L$'s exist. E.g., assuming that $A$ can be translated to a subset of a 
of the cube $[0,r-1]^d$, the function $\sum_{i=1}^d r^{i-1}x_i$ is one-to-one on $A$ by
the uniqueness of the base-$r$ representation, and so its minimum on $A$ is attained exactly once.   

For $k\geq 0$ and $a\in\Z^d$, define $\Fgoth_k(a) \,=\, \fgoth_k(L(a))$.
Respectively, for a finite subset $A \subset \Z^d$, define 
$\Fgoth_k(A)\,=\,\Sumodd_{a\in A} \Fgoth_k(a)\,$.

The proof proceeds along the same lines as in the $1$-dimensional case.
\begin{claim}\label{claim:F1}
If \;$\Fgoth_{k-1}$ is $0$-stable with respect to a finite $A\subseteq \Z^d$, 
then $\Fgoth_{k}$ is stable with respect to it.
\end{claim}
It suffices to show that for any $p\in \Z^d$, and any unit vector $e\in \Z^d$,
$\Fgoth_{k}(A + p +e) = \Fgoth_{k}(A + p)$. Let $L(e)=c$. If $c=0$, the statement 
is trivial. If $c<0$ the statement reduces to the case $c>0$ by considering $-e$ instead
of $e$. Thus, without loss of generality, $c>0$. By~(\ref{eq:f}), the linearity of $L$,
and the first requirement on it, 
\[
\Fgoth_{k}(A + p +e) ~= \Sumodd_{t\in A+p+e} \fgoth_{k}(L(t)) ~=
\Sumodd_{t\in A+p} \fgoth_{k}(L(t) +c) ~=
\Sumodd_{t\in A+p} \fgoth_{k}(L(t)) ~~\sumodd~~ 
\Sumodd_{i=0}^{c-1}\Sumodd_{t\in A+p} \fgoth_{k-1}(L(t)+i) ~=
\]
\[ 
=~~ \Fgoth_k(A+p) ~~\sumodd~~ \Sumodd_{i=0}^{c-1}\Sumodd_{t\in A+p +i\cdot e_1} \fgoth_{k-1}(L(t)) ~=~ \Fgoth_k(A+p) ~~\sumodd~~  
\Sumodd_{i=0}^{c-1}  \Fgoth_{k-1}(A+p +i\cdot e_1)\,.
\]
Since $\Fgoth_{k-1}$ is $0$-stable with respect to $A$, the second summand is $0$, and thus $\Fgoth_{k}(A + p +e) = \Fgoth_{k}(A + p)$. This concludes the proof of Claim~\ref{claim:F1}.

To conclude the proof of the theorem, observe that $\Fgoth_0$ is stable with respect to $\A$,
and thus, by Claim~\ref{claim:F1}, either there exists $k\geq 0$ such that $\Fgoth_k$ is stable, 
but not $0$-stable, precisely as desired, or all $\Fgoth_k$'s are $0$-stable. 

As before, the second possibility does not occur. Indeed, by the second requirement
on $L$, there exists $A\in \A$ on which $L$ attains a unique minimum.
Let  $p\in A$ be the point on which the minimum is attained, and let $a\in\Z$ denote its value.
Then, $L(A -a\cdot e_1 )= L(A)-a$~ is a subset of $[0,k]$ for some $k\in\N$, 
and $0$ has a unique pre-image $p'=p -a\cdot e_1$. By Claim~\ref{claim:f2}, 
~~$\Fgoth_k(A-a\cdot e_1) ~=~ \fgoth_k(0) ~\sumodd~  
\Sumodd_{t\in A \setminus p'}\;\fgoth_k(L(t)) ~=~ 1\sumodd 0 ~=~ 1$.
\end{proof}
\section{The Main Theorem}
\label{sec:main}
We can now prove the main theorem in full generality, making no assumption about
parallel edges.
\begin{theorem}\label{theorem:main}
Let $\P$ be a rational polygon with $k$ distinct classes of parallel edges.
Then, there exists a bounded degree odd cover $\cal F$ of $\R^2$ by translates of $\P$
with density $\rho({\cal F}) \leq A_{\rm INT}(\P)\cdot 2^{k-1}$.
Consequently,~
$\alpha^\circ (\P) \geq A_{\rm INT}(\P)^{-1}\cdot 2^{-(k-1)}$.
\end{theorem}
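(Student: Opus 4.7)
The plan is to follow the proof of Theorem~\ref{theorem:pnp} almost verbatim, except that I replace the translation set $\Z^{2}$ by a carefully chosen subset $Z \subseteq \Z^{2}$, namely the support of a $\Z_{2}$-valued weighting $\Fgoth$ on $\Z^{2}$ produced by Theorem~\ref{theorem:f}. The reason for this substitution is exactly the obstruction noted at the end of the proof of Theorem~\ref{theorem:pnp}: when $\P$ has parallel edges, the boundary contributions of those edges on any line of their common direction can cancel, so that $\P \circplus \Z^{2}$ may be empty (e.g.\ whenever $\P$ is centrally symmetric). Weighting the translates by a suitable $\Fgoth$ is designed to break these cancellations.

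First I would assume without loss of generality that $\P = \P_{\rm INT}$, so that $A(\P) = A_{\rm INT}(\P)$. Let $d_{1}, \ldots, d_{k}$ be the distinct direction classes of edges of $\P$, and for each $i$ let $L_{d_{i}}$ be the family of lines parallel to $d_{i}$ that contain points of $\Z^{2}$. For each $i$, I would fix a reference line $\ell_{i} \in L_{d_{i}}$ and a small generic segment $I_{i} \subset \ell_{i}$ lying strictly between two consecutive $\Z^{2}$-points of $\ell_{i}$, and set
\[
C_{i} ~=~ \big\{\, z \in \Z^{2} \,:\, I_{i} \;\subset\; \partial(\P + z) \,\big\}\,.
\]
Each $C_{i}$ is a finite subset of $\Z^{2}$; moreover, a direct check shows that the analogous set built from any other $\ell \in L_{d_{i}}$ and any other small generic $I \subset \ell$ is just a $\Z^{2}$-translate of $C_{i}$. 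With $\A = \{C_{1}, \ldots, C_{k}\}$, Theorem~\ref{theorem:f} applied with $d = 2$ then supplies a function $\Fgoth : \Z^{2} \to \Z_{2}$ that is stable with respect to $\A$ but not $0$-stable. Stability here means that $c_{i} := \Fgoth(C_{i} + p) \in \Z_{2}$ is independent of $p \in \Z^{2}$ for every $i$, while not-$0$-stability guarantees that at least one $c_{i}$ equals $1$.

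Now set $Z = \{z \in \Z^{2} : \Fgoth(z) = 1\}$. Rerunning the parity-of-cover analysis from the proof of Theorem~\ref{theorem:pnp} with $Z$ in place of $\Z^{2}$, the parity of the number of $(\P + z)$ with $z \in Z$ whose boundary contains a generic small segment $I$ on a line $\ell \in L_{d_{i}}$ is exactly $c_{i}$, uniformly in $I$ and $\ell$. Consequently $\P \circplus Z$ is a union of cells of the arrangement $\R^{2} \setminus \bigcup_{i\,:\,c_{i}=1} L_{d_{i}}$ of ``active'' lines, meeting the hypothesis of Claim~\ref{claim:stripes2}, and hence is a $\sumodd$-sum of stripe patterns with pairwise distinct directions (one per $i$ with $c_{i} = 1$); not-$0$-stability makes this sum nonempty. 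Lemma~\ref{lemma:stripes} then produces a set $U \subset \R^{2}$ with $|U| \leq 2^{k-1}$ and $(\P \circplus Z) \circplus U = \R^{2}$, so that $\F = \{\P + z + u : z \in Z,\, u \in U\}$ is a bounded-degree odd cover of $\R^{2}$ of density
\[
\rho(\F) ~=~ A(\P) \cdot \rho_{\Fgoth} \cdot |U| ~\leq~ A_{\rm INT}(\P) \cdot 2^{k-1}\,,
\]
using $\rho_{\Fgoth} \leq 1$; the bound on $\alpha^{\circ}(\P)$ then follows from the Odd Cover Lemma~\ref{lemma:beta}. The main obstacle I expect is the uniformity claim above --- that the stability of $\Fgoth$ with respect to $\A$ really does force an entire direction class to be uniformly active or uniformly passive in $\P \circplus Z$, so that the elegant structure of Theorem~\ref{theorem:pnp} survives parallel edges; once this is established, the remainder of the proof is an essentially cosmetic adaptation of the argument in Section~\ref{sec:main}.
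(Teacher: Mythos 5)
Your proposal is correct and follows essentially the same strategy as the paper: both build the translation set $Z$ as the support of a weighting $\Fgoth$ supplied by Theorem~\ref{theorem:f}, applied to a family of subsets of $\Z^2$ encoding which translates contribute boundary on a generic segment in each direction, and both then run the parity-of-cover argument from Theorem~\ref{theorem:pnp} followed by Lemma~\ref{lemma:stripes} and the Odd Cover Lemma. Your set $C_i = \{z : I_i \subset \partial(\P+z)\}$ is precisely a $\Z^2$-translate of the paper's $-A_{d_i}$ (the set of lattice points $z$ with $z, z+v_{d_i}$ both on a direction-$d_i$ edge of $\P$), so the two choices of $\A$ give the same notion of stability and the proofs coincide.
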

\begin{proof}
While the family of translates will in general be different from the one
used in the proof of Theorem~\ref{theorem:pnp}, the logical structure of the 
proof will be essentially identical.
Let us re-examine this structure.

Assuming that $\P=\P_{INT}$, the first and main goal is to construct a family 
$\F=\{\P + z\}_{z\in Z}$\,, $Z \subseteq \Z^2$, such that 
${\P} \circplus Z$ is a $\sumodd$-sum of at most $k$ stripe patterns. 
(In the former proof, $Z$ was just $\Z^2$.) 
A close reading of the proof of Theorem~\ref{theorem:pnp} 
reveals that in order to prove this fact about $\F$, it is sufficient to show 
that $\F$ has a certain property. To formulate it we need some definitions.

Let $D(\P)$ be the set of all the classes of parallel edges of $\P$, or simply 
the {\em directions} of $\P$. For every $d \in D(\P)$, let $L_d$ be the set of all lines 
in $\R^2$ in the direction of $d$ that contain integer lattice points.
As before, each $L_{d}$ is a discrete set of parallel 
lines with equal distances between any two consecutive ones.

Consider the arrangement of lines $\bigcup_{d} L_{d}$. Observe that since
$Z \subseteq \Z^2$, for any $(\P + z) \in \F$, and any open cell $C$ of 
$\R^2 \setminus \bigcup_{d} L_{d}$\,, either $C \subseteq (\P + z)$, or $C \cap (\P + z) = \emptyset$.
\begin{property}
\label{pr:crucial}
Let $P,D(P),Z,{\cal F},\bigcup_{d} L_{d}$ be as above. 
Further, let $I$ denote an edge of the arrangement $\bigcup_{d} L_{d}$ (i.e., a common 1-dimensional
boundary of two adjacent cells) that lies on a line $\ell \in L_{d}$, $d \in D(\P)$. 
The family $\cal F$ has the desired property if:
\begin{enumerate}
\item
For {\em any} edge $I$ of the arrangement $\bigcup_{d} L_{d}$ as above,
the parity of the number of sets in $\F$ 
whose boundary contains $I$ depends solely on the corresponding direction $d$. 
\item Moreover, there exists $d\in D(\P)$ such that this parity is odd. We call such a direction 
{\em active}, and denote the set of all active directions by $AD(\P)$.
\end{enumerate}
\end{property}

Once Property~\ref{pr:crucial} is established for $\F =\{\P + z\}_{z\in Z}$,
the {\bf argument} from the proof of Theorem~\ref{theorem:pnp} 
implies that $\P \circplus Z$ is a (nonempty) union of cells of 
$\R^2 \setminus \bigcup_{\footnotesize{d \in AD(\P)}} \,L_d$
that satisfy the assumptions of Claim~\ref{claim:stripes2}.
Applying Claim~\ref{claim:stripes2}, one concludes that $\P \circplus Z$ is equal to 
$S_{1} \sumodd \ldots \sumodd S_{r}$ for some stripe patterns 
$S_{1}, \ldots, S_{r}$, and $r=|AD(\P)|$, the number of active directions, 
is at most $|D(\P)| = k$.

Once the main goal is achieved, the rest is easy.
Lemma~\ref{lemma:stripes} is used to conclude that
there exists a finite set $U \subset \R^2$ with $|U| \leq 2^{r-1}$, 
such that~ $(\P \circplus Z) \circplus U \;=\; \R^2$. Equivalently, the 
(multi-) family of sets
~$
\F ~=~ \{\P+z+u\}_{\,z\in Z,\,u\in U}
$~
is an odd cover of the plane. Since $Z$ is a subset of $\Z^2$,  
the density of this odd cover is at most 
$A_{\rm INT}(\P)\cdot |U| \leq A_{\rm INT}(\P)\cdot 2^{k-1}$. (For the appearance
of $A_{\rm INT}$, consult Claim~\ref{eq:dense}.) Finally, by the Odd Cover 
Lemma~\ref{lemma:beta}, one concludes 
that ~$\alpha^\circ (\P) \geq A_{\rm INT}(\P)^{-1}\cdot 2^{-(k-1)}$, establishing 
the theorem. 

$\mbox{}$
\\
In view of the above, in order to prove Theorem~\ref{theorem:main}, 
it suffices to construct $Z \subseteq \Z^2$ such that $\F=\{\P + z\}_{z\in Z}$\, 
has Property~\ref{pr:crucial}. The remaining part of this section
is dedicated to constructing such $Z$, and proving that $\F$ 
has the required property.

The set of translates $Z$ is constructed as follows. Assume that $\P=\P_{INT}$\,. 
In particular, the vertices of $\P$ are in $\Z^2$.
For every direction $d \in D(\P)$, define the vector $v_d \in \Z^2$ as the 
difference between (\emph{any}) pair of two \emph{consecutive} 
integer lattice points on ({\em any}) line in $L_{d}$, the set of all lines in direction 
$d$ through an integer lattice point.

\begin{figure}[ht]
    \centering
    \includegraphics[width=7cm]{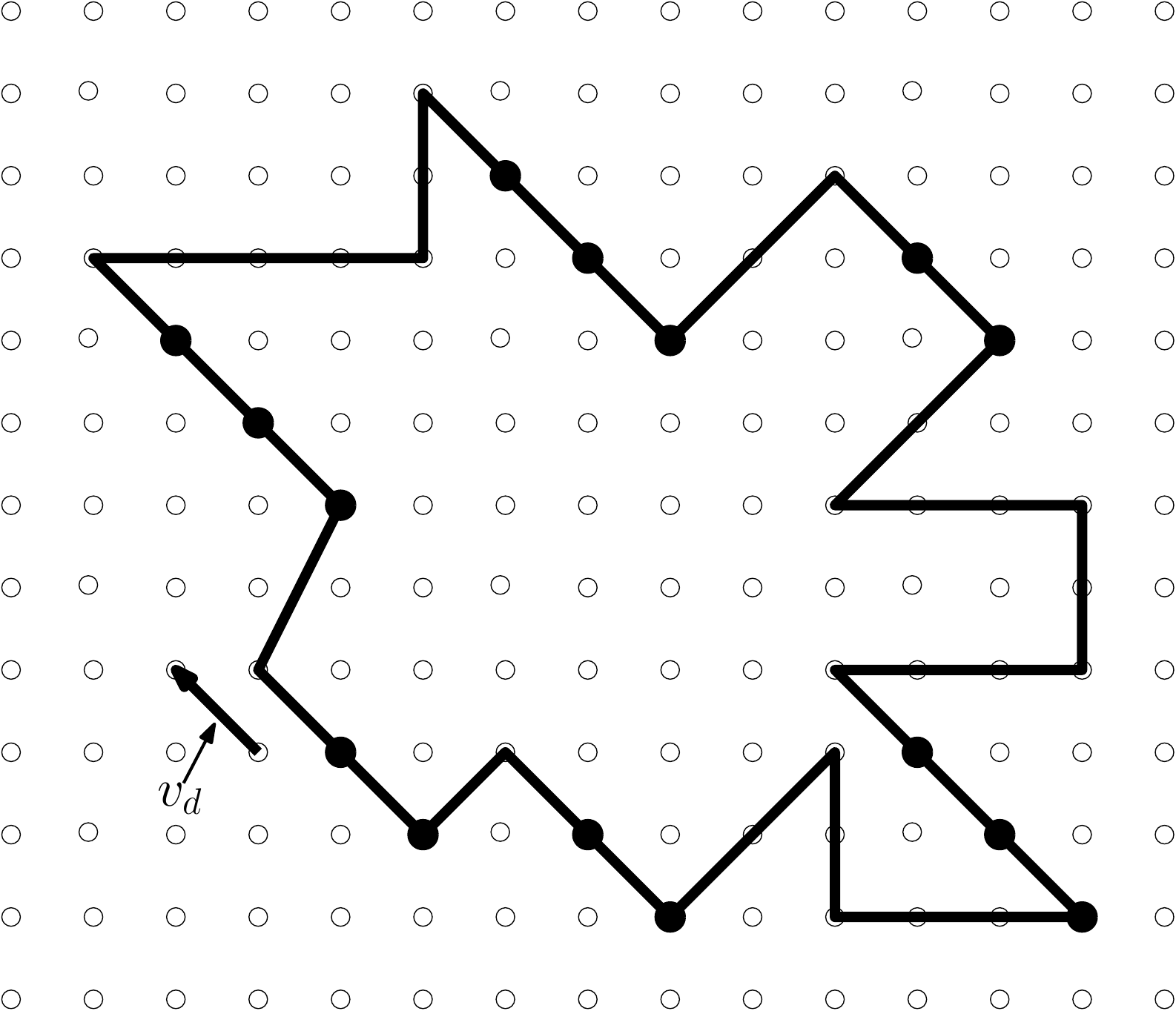}
        \caption{The points of $A_{d}$ are the filled discs in the picture}
        \label{figure:Ad}
\end{figure}

Let $A_{d}$ be the set of all integer lattice points $z$ 
on the boundary of $\P$ such that both 
$z$ and $z+v_{d}$ lie on an edge of $\P$ in the direction $d$ (see Figure
\ref{figure:Ad}). Let $\A = \{-A_d\}_{d\in D(\P)}$.
By Theorem~\ref{theorem:f}, there exists a $\Z_2$-weighting \,$\Fgoth$\, of $\Z^2$ that is stable,
but not $0$-stable, with respect to $\A$. 
Define $Z$ as the support of $\Fgoth$, i.e., $Z=\{z \mid \Fgoth(z)=1\}$.
Finally, define $\F = \{\P + z\}_{z\in Z}$. Our goal is to show that the family 
$\F=\{\P + z\}_{z\in Z}$ has Property~\ref{pr:crucial}.

Call a direction $d$ of an edge of $\P$ \emph{active} if
$\Fgoth(-A_{d})=1$, and \emph{passive} if $\Fgoth(-A_i)=0$. \\
We claim that a point $p \in \Z^2$ belongs to an odd number of sets in 
$\{A_d + z\}_{z\in Z}$ if $d$ is active, and to an even number of those sets 
if $d$ is passive.
Indeed, the number of solutions of the equation ~$a + z =p$, where 
$a\in A_d,\;z\in Z$,
is precisely the size of $(-A_d+p) \cap Z$, and hence its parity is 
$\Fgoth(-A_d+p)=\Fgoth(-A_d)$, as desired.

Let $I\subset \ell \in L_d$, for some $d$, be an edge in the arrangement
of lines $\bigcup_{\footnotesize{d\in D(\P)}}$\,.
Notice that $I$ cannot contain integer lattice points in its (relative) interior. 
There exists two consecutive integer lattice points $p$ and $q$ on $\ell$
such that $I$ is contained in the line segment $J = [p,q] \subset \ell$.
Observe that $q-p$ is either $v_{d}$ or $-v_{d}$; assume w.l.o.g., that
$q-p=v_{d}$.

We claim that the parity of the number of sets from $\F=\{\P + z\}_{z\in Z}$
whose boundary contains $J$ is odd if $d$ is active, and even if it is passive.
Indeed, $J$ is contained in the boundary of $(P+z)$, $z\in Z$, if and only if 
$(A_d + z)$ contains $p$. As we have already seen, the parity of the number of 
such sets is odd if and only if $d$ is active. In particular, it depends only on $d$, and not on $I$, 
as desired. Moreover, by Theorem~\ref{theorem:f}, there exists at least one
active direction $d$.  

This concludes the verification of Property~\ref{pr:crucial} for the constructed family
${\cal F}=\{\P + z\}_{z\in Z}$, which in turn concludes the proof of Theorem \ref{theorem:main}.
\end{proof}

Notice that the above proof makes no use of the connectivity of $\P$ nor of 
the connectivity of its boundary. 
Thus, as it has been already mentioned in the Introduction, Theorem~\ref{theorem:main}
applies equally well to any compact figure in $\R^2$ with non-empty interior, piecewise
linear boundary, and finite number of vertices, all of which are rational. 
%
%

\end{document}